\numberwithin{equation}{section}
\numberwithin{figure}{section}
\begin{document}

\newtheorem{theorem}{Theorem}[section]
\newtheorem{corollary}[theorem]{Corollary}
\newtheorem{lemma}[theorem]{Lemma}
\newtheorem{proposition}[theorem]{Proposition}
\newtheorem{conjecture}[theorem]{Conjecture}
\newtheorem*{problem}{Problem}
\theoremstyle{definition}
\newtheorem{definition}[theorem]{Definition}
\theoremstyle{definition}
\newtheorem{remark}[theorem]{Remark}
\newcommand{\ep}{\varepsilon}
\newcommand{\eps}[1]{{#1}_{\varepsilon}}

\def\R{\mathbb{R}}
\def\Rp{\mathbb{R}^+}
\def\N{\mathbb{N}}
\def\Np{\mathbb{N}^+}
\def\calO{\mathcal{O}}
\def\calR{\mathcal{R}_{\lambda,\mu}}
\def\calRtilde{\tilde{\mathcal{R}}_{\lambda,\mu}}
\def\tF{\mathcal{F}^0}
\def\Fa{\mathcal{F}^{\alpha}}
\def\Fan{\mathcal{F}^{\alpha_n}}
\def\Fbak{\mathcal{F}^{\overline{\alpha}_k}}
\def\Fbakl{\mathcal{F}^{\overline{\alpha}_{k_\ell}}}
\def\vbak{v^{\overline{\alpha}_k}}
\def\vbakl{v^{\overline{\alpha}_{k_\ell}}}
\newcommand{\Int}[4]{\int_{#1}^{#2}\! #3 \, #4}
\newcommand{\pair}[2]{\left\langle #1 , #2 \right\rangle}
\newcommand{\map}[3]{#1 : #2 \to #3}
\newcommand{\sgn}{\operatorname*{sgn}}
\newcommand{\pd}[2]{\dfrac{\partial#1}{\partial#2}}

\title{Anisotropic interactions in a first-order aggregation model: a proof of concept}
\author{Joep H.M.~Evers\thanks{Corresponding author; Institute for Complex Molecular Systems \& Centre for Analysis, Scientific computing and Applications, Eindhoven University of Technology, P.O.~Box 513, 5600 MB Eindhoven, The Netherlands, email: \texttt{j.h.m.evers@tue.nl}.} \  \  Razvan C.~Fetecau\thanks{Department of Mathematics, Simon Fraser University, 8888 University Dr., Burnaby BC V5A 1S6, Canada, email: \texttt{van@sfu.ca}.} \  \  Lenya Ryzhik\thanks{Department of Mathematics, Stanford University, Stanford CA 94305, USA, email: \texttt{ryzhik@math.stanford.edu}.}}
\date{\today}
\maketitle

\begin{abstract}
We extend a well-studied ODE model for collective behaviour by considering anisotropic interactions among individuals. Anisotropy is modelled by limited sensorial perception of individuals, that depends on their current direction of motion. Consequently, the first-order model becomes  {\em implicit}, and new analytical issues, such as non-uniqueness and jump discontinuities in velocities, are being raised. We study the well-posedness of the anisotropic model and discuss its modes of breakdown. To extend solutions beyond breakdown we propose a relaxation system containing a small parameter $\ep$, which can be interpreted as a small amount of inertia or response time. We show that the limit $\ep\to0$ can be used as a jump criterion to select the physically correct velocities. In smooth regimes, the convergence of the relaxation system as $\ep\to0$ is guaranteed by a theorem due to Tikhonov. We illustrate the results with numerical simulations in two dimensions.\\
\\
\textbf{Keywords} : Anisotropy; visual perception; aggregation models; implicit equations; regularization; relaxation time; uniqueness criteria; singular perturbation.\\
\\
\textbf{MSC 2010} : 34A09, 34A12, 37M05, 65L11.
% 37M05:Approximation methods and numerical treatment of dynamical systems -> Simulation
% 92B05:General biology and biomathematics
% 65L05:Numerical analysis -> ODEs -> Initial value problems
% 65L11:Numerical analysis -> ODEs -> Singularly perturbed problems
% 34A09:ODEs -> General theory -> Implicit equations, differential-algebraic equations
% 34A12:ODEs -> General theory -> Initial value problems, existence, uniqueness, continuous dependence and continuation of solutions
\end{abstract}

%%%%%%%%%%%%%%%%%%%%%%%%%%%%%%%%%%%%%%%%%%%%%%%%%%%%%%%%%%

\section{Introduction}

Mathematical models for collective behaviour in biological aggregations have attracted a large interest in recent years. One extensively studied model describes the evolution of positions~$x_i$ ($i=1,\dots,N$) of $N$ particles (individuals) in $\mathbb{R}^d$:
\begin{subequations} \label{eqn:fo-nobz} \begin{align}
    \frac{dx_i}{dt} &= v_i, \\
    v_i &= -\frac{1}{N} \displaystyle\sum_{j\neq i}\nabla_{x_i}K(|x_i-x_j|). \label{eqn:vi-nobz} \end{align} \end{subequations}
%
%\begin{equation}\label{eqn:fo-nobz}
%\left\{
%  \begin{array}{l}
%    \dfrac{dx_i}{dt} = v_i, \\\\
%    v_i = -\dfrac1N\displaystyle\sum_{j\neq i}\nabla_{x_i}K(|x_i-x_j|).
%%    x_i(0)= x_i^0.
%  \end{array}
%\right.
%\end{equation}
Here, $K$ is an aggregation potential, which incorporates inter-individual social interactions such as long-range attraction and short-range repulsion. Its  form depends on the particular application at hand.

Due, in part, to the complexity of the nonlinear dynamics in \eqref{eqn:fo-nobz}, the theoretical research has focused to a large extent on its continuum limit, the  evolution equation for the aggregation density $\rho(x,t)$:
%\begin{equation}\label{eq:aggre}
%\left\{
 %\begin{array}{l}
\begin{subequations} \label{eq:aggre}
\begin{align}
\rho_{t} &+\nabla\cdot(\rho v)=0,  \label{eqn:rhot} \\
v &=-\nabla K\ast\rho.
\end{align}
\end{subequations}
%\end{array}
%\right.
%\end{equation}
Here, $\ast$ denotes the spatial convolution. The density $\rho(x,t)$ represents a continuum approximation to the distribution of individuals in \eqref{eqn:fo-nobz} as $N\rightarrow\infty$ (a formal derivation of this fact can be found in \cite{BodnarVelasquez1}).

Both the discrete and the continuous models appear in various works on the mathematical models for biological aggregations -- see~\cite{M&K,TBL} for an extensive review, both of the literature and the relevance of the models. The same models also arise in a number of other applications, such as the granular media~\cite{CaMcVi2006}, the self-assembly of nanoparticles~\cite{ HoPu2006}, the Ginzburg-Landau vortices~\cite{DuZhang03} and the molecular dynamics simulations of matter~\cite{Haile1992}.  At the PDE level, the well-posedness of~\eqref{eq:aggre} was studied in~\cite{BodnarVelasquez2, BertozziLaurent, BertozziLaurentRosado}, and its long-time behaviour in in~\cite{Burger:DiFrancesco,LeToBe2009,FeHuKo11,FeHu13}. One focus of the analytical investigations was on the possibility of the blow-up of the solutions via mass concentration into one or several Dirac distributions when the potential $K$ is attractive~\cite{FeRa10,BertozziCarilloLaurent, HuBe2010}.

Numerical simulations of both models are almost exclusively based on the discretizations of the particle model \eqref{eqn:fo-nobz}. They have demonstrated a   wide variety of possible behaviours of  solutions  \cite{Brecht_etal2011,KoSuUmBe2011,LeToBe2009,BrechtUminsky2012,Balague_etalARMA,FeHuKo11}. As the system \eqref{eqn:fo-nobz} represents a gradient flow with respect to the energy
\begin{equation}
\label{eqn:energy}
E(x_1,\dots,x_N) = \frac{1}{N} \sum_{i,j\neq i} K(|x_i-x_j|),
\end{equation}
its long-time dynamics can be characterized by the extrema of this interaction function. They can be very diverse: uniform densities in a ball, uniform densities on a co-dimension one manifold (ring in 2D, sphere in 3D), annuli, soccer balls, etc. Many of these patterns are observed experimentally in self-assembled biological aggregations \cite{Parish:Keshet, Ballerini_etal, Camazine_etal, Theraulaz:Deneubourg}, which gives practical ground and motivation to the studies of this model.

In the biological applications,  \eqref{eqn:fo-nobz} is used to model animal aggregations, such as insect swarms, fish schools, bird flocks, etc.~\cite{M&K}.  The interaction potential in \eqref{eqn:fo-nobz} is  {\em isotropic}, as it depends only on the pairwise distances between individuals.  This assumption is often unrealistic, as most species have a restricted zone of social perception, defined by the limitations of their field of vision or of other perception senses \cite{Seidl:Kaiser, Kunz:Hemelrijk2012}. However, despite the extensive literature on model \eqref{eqn:fo-nobz},  there has been no systematic study of its (more realistic) anisotropic extensions.  The primary goal of this paper is to fill this gap.   We note that anisotropy/non-symmetry of interactions was considered, both analytically and numerically, in works on {\em second-order} aggregation models \cite{CarrilloVecil2010,Frouvelle-anisotropic,AlbiPareschi2013}, where the velocity is governed by a differential equation itself. However, adding anisotropy to {\em first-order} models such  as \eqref{eqn:fo-bz}, though similar conceptually, is very different at a mathematical and numerical level.

We introduce perception restrictions  in   \eqref{eqn:fo-nobz} via weights in \eqref{eqn:vi-nobz}
that   limit  the influence by individuals $j$ on the reference individual $i$:
 \begin{equation}
 \label{eqn:v-weighted}
 v_i = -\frac{1}{N} \displaystyle\sum_{j\neq i}\nabla_{x_i}K(|x_i-x_j|) w_{ij}.
 \end{equation}
The choice of the weights $w_{ij}$ depends on what limitations on the field of perception one wants to consider. In this paper, we consider the social perception to be entirely {\em visual} and assume that individuals have a limited field of vision centred around their direction of motion. Given a reference individual located at $x_i$ moving with velocity $v_i$, the weights $w_{ij}$ should depend on the relative position $x_j-x_i$ of individual $j$ with respect to the current direction of motion $v_i$ of individual $i$ (such as whether individual $j$ is ahead or behind individual $i$). Mathematically, we model $w_{ij}$ as
 \begin{equation}
 \label{eqn:weights}
 w_{ij}= g\left(\dfrac{x_i-x_j}{|x_i-x_j|}\cdot \dfrac{v_i}{|v_i|}\right),
 \end{equation}
with a function $g$ chosen so that $w_{ij}$ are largest when $j$ is right ahead of individual $i$ ($x_j-x_i$ is in the same direction of $v_i$) and lowest when $j$ is right behind individual $i$ ($x_j-x_i$ in the opposite direction of $v_i$). Note that the weights $w_{ij}$ are {\em not} symmetric -- in general, $w_{ij} \neq w_{ji}$.

With \eqref{eqn:v-weighted} and \eqref{eqn:weights}, the original model \eqref{eqn:fo-nobz} becomes
\begin{subequations}
\label{eqn:fo-bz}
\begin{align}
    \frac{dx_i}{dt} &= v_i, \\
    v_i &= -\frac{1}{N} \displaystyle\sum_{j\neq i}\nabla_{x_i}K(|x_i-x_j|)\,g\left(\dfrac{x_i-x_j}{|x_i-x_j|} \cdot \dfrac{v_i}{|v_i|}\right), \label{eqn:vi-bz}
  \end{align}
\end{subequations}
the aggregation model   we study in this paper. The velocities $v_i$ are no longer explicitly given in terms of the spatial configuration $\{x_1,x_2,\dots,x_N\}$ as in \eqref{eqn:vi-nobz}, but are defined instead through the {\em implicit} equation \eqref{eqn:vi-bz}, which, in general, may  have multiple solutions. Hence, {\em non-uniqueness} of the velocity is a major issue immediately brought up by the anisotropic extension \eqref{eqn:fo-bz}.

The second important issue is the {\em loss of smoothness} of solutions of \eqref{eqn:fo-bz}.
The roots of \eqref{eqn:vi-bz} may disappear dynamically, as the spatial configuration $\{x_1, x_2, \dots, x_N\}$ changes in time. Hence, velocities have to be allowed to be discontinuous at these jump times, and a selection criteria for the allowable/physical jumps should be defined and enforced. Finally, a third issue is that velocities in \eqref{eqn:fo-bz} can become zero (particles can {\em stop}) in finite time, and the model, at least as it appears in \eqref{eqn:fo-bz}, is not even defined when some $v_i=0$.

%\begin{equation}
%\label{eqn system of equations first-order}
%\left\{
%  \begin{array}{l}
%    \dfrac{dx_i}{dt} = v_i, i\in\{1,\ldots,N\}\\\\
%    v_i = -\dfrac1N\displaystyle\sum_{j\neq i}\nabla_{x_i}K(|x_i-x_j|)\,g\left(\dfrac{x_i-x_j}{|x_i-x_j|},\dfrac{v_i}{|v_i|}\right),\\\\
%    x_i(0)= x_i^0,
%  \end{array}
%\right.
%\end{equation}

The main tool in dealing with the issues above is to introduce a relaxation term in the equation for the velocities $v_i$. More precisely, we consider the following regularized system
\begin{subequations}
\label{eqn:so}
\begin{align}
    \frac{dx_i}{dt} &= v_i, \\
    \ep\dfrac{dv_i}{dt} &= -v_i -\dfrac1N\displaystyle\sum_{j\neq i}\nabla_{x_i}K(|x_i-x_j|)\,g\left(\dfrac{x_i-x_j}{|x_i-x_j|} \cdot \dfrac{v_i}{|v_i|} \right) .
      \end{align}
\end{subequations}
From the biological point of view, \eqref{eqn:so} introduces a small response time for the individuals.
Mechanically, \eqref{eqn:fo-nobz} was derived in \cite{BodnarVelasquez1}  from   Newton's law   by taking mass to be zero with $g \equiv 1$ (fully isotropic). We bring back the original second-order model, with a small mass $\ep$, as this
is essential in dealing with the anisotropic interactions.

The system \eqref{eqn:so} is well-posed, as solutions exist (locally) and are unique. Using
an old theorem of Tikhonov \cite{Tikhonov1952,Vasileva1963}, the limit $\ep \to 0$ in  \eqref{eqn:so} can be performed in the smooth regime,  provided that the solutions of \eqref{eqn:fo-bz} are asymptotically stable in a certain sense (see  Section \ref{sect:conv} for details). At the times of the velocity jumps in \eqref{eqn:fo-bz}, we use the relaxation model  \eqref{eqn:so} to enforce a  ``physical" jump selection criteria. The relaxation term in \eqref{eqn:so} is shown to smooth out the trajectories of  \eqref{eqn:fo-bz} -- see Figure \ref{fig:trajectories full and zoom}(b,c) for an illustration.
The modes of breakdown of \eqref{eqn:fo-bz} and the jump selection through the relaxation model \eqref{eqn:so} are presented in Section~\ref{sect:breakdown}.

The main goal of this paper is to demonstrate   how the regularization  \eqref{eqn:so} can be used as an analytical, and a numerical tool to understand and simulate solutions to  \eqref{eqn:fo-bz}.  We do not deal here with extensive numerical simulations and the complex issue of the long term behaviour of \eqref{eqn:fo-bz}. We restrict ourselves to the two dimensional simulations of \eqref{eqn:fo-bz}, where we show how \eqref{eqn:so} can be used to deal with instantaneous root losses, as well as particle stopping.

\begin{figure}%
\centering
\includegraphics[width=0.5\textwidth]{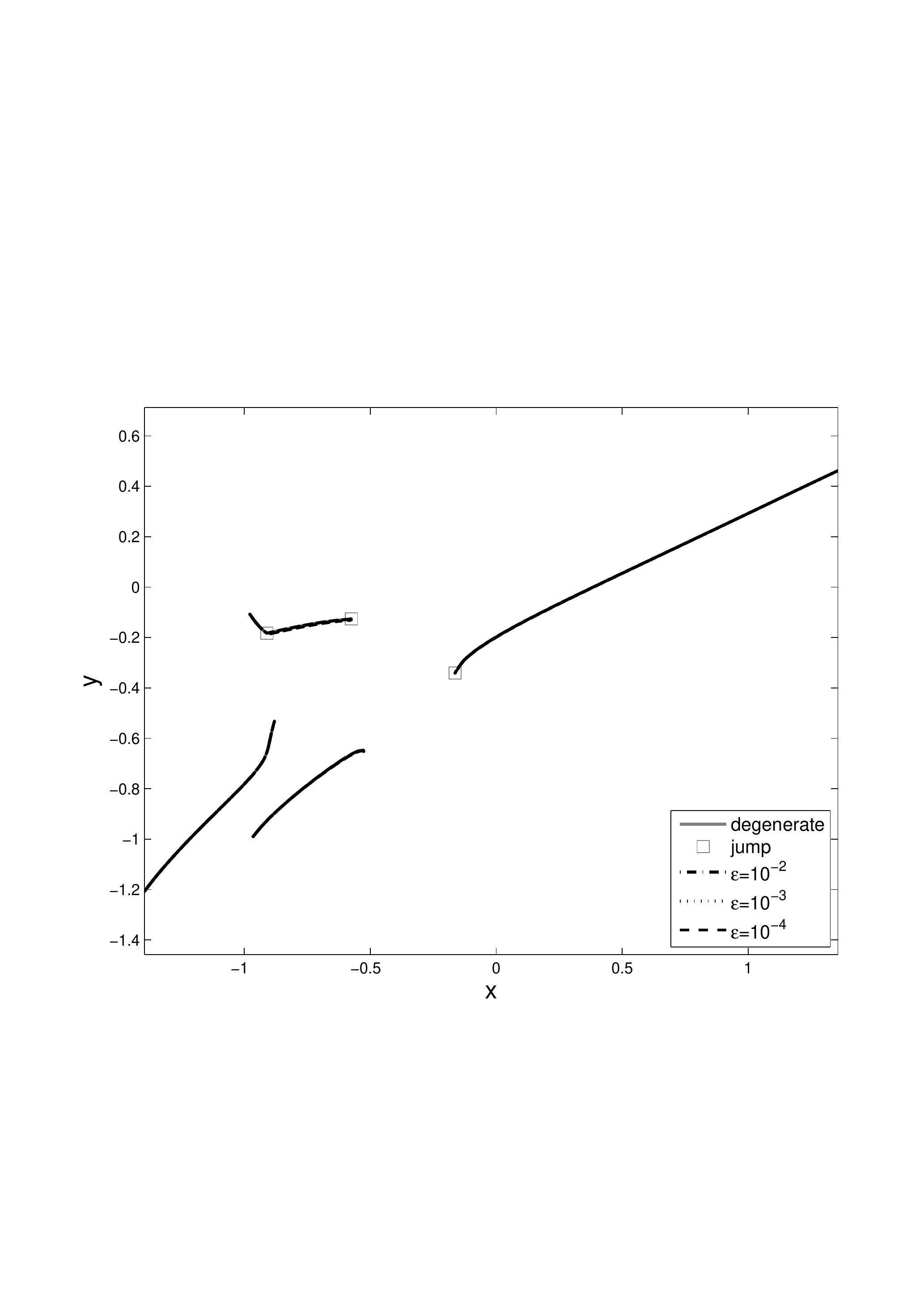}\\
(a)\vspace{0.1cm}\\ \vspace{0.5cm}
\includegraphics[width=0.45\textwidth]{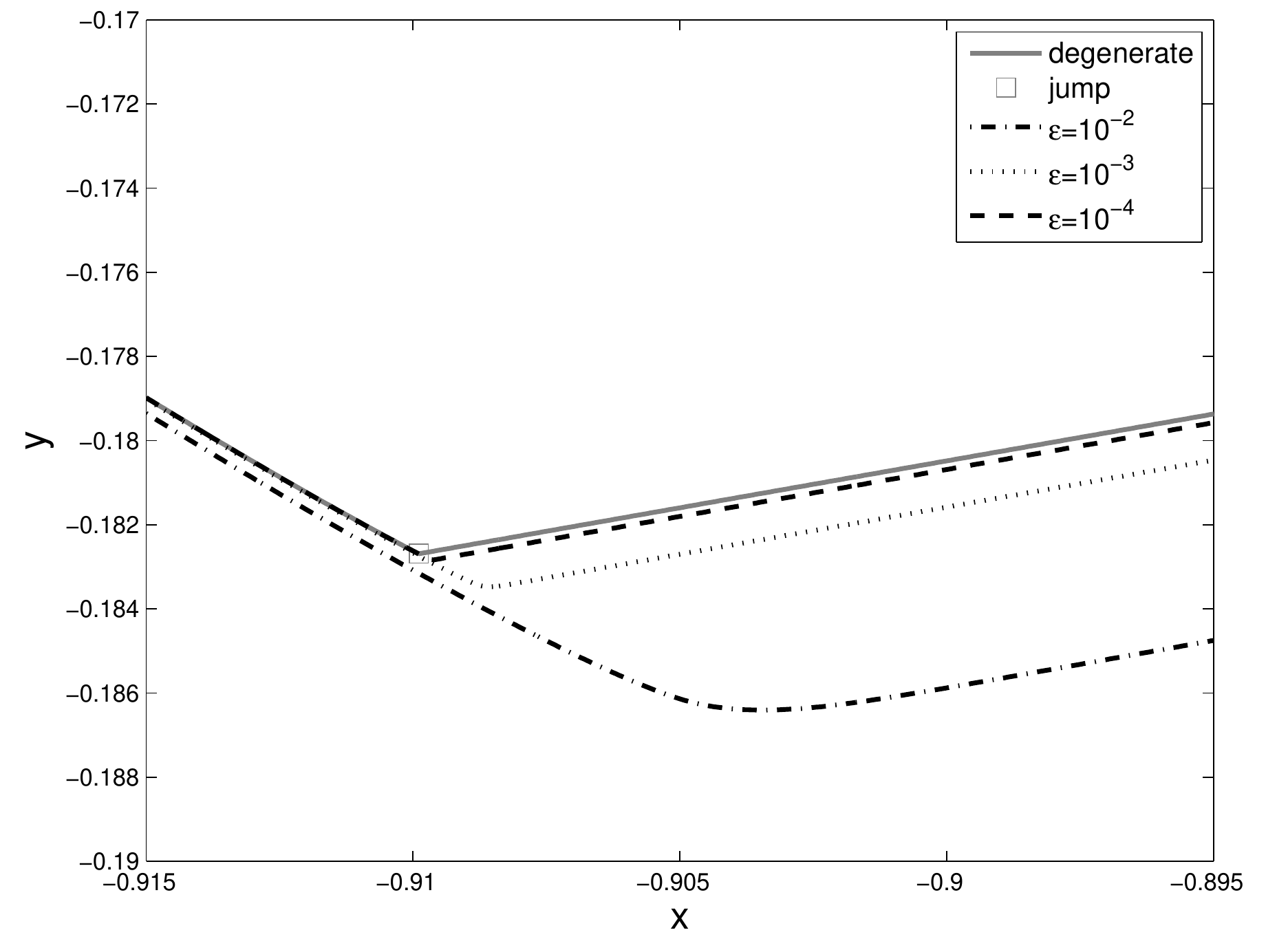}
$~~~$
\includegraphics[width=0.45\textwidth]{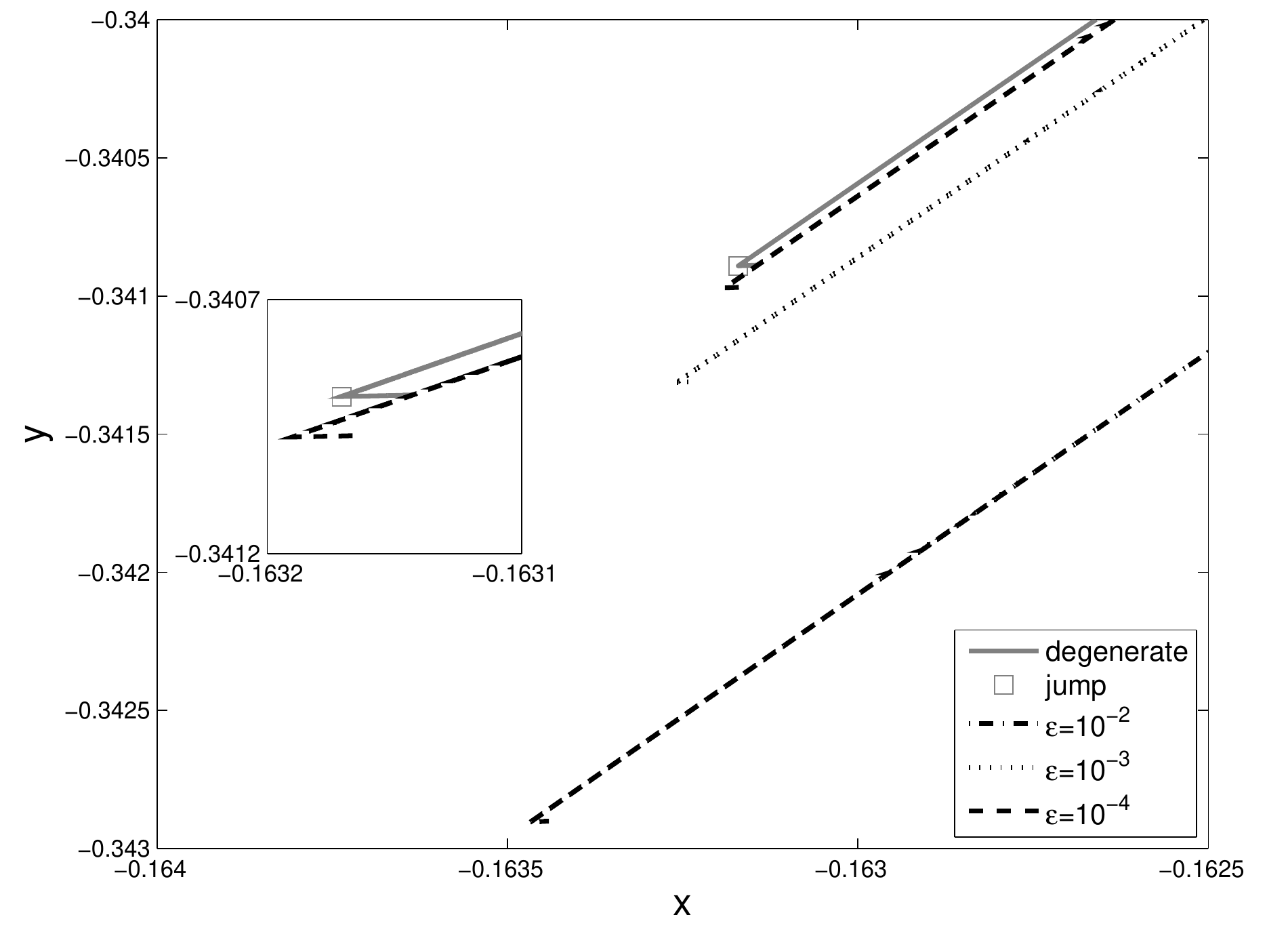}\\
(b) \hspace{0.45\textwidth} (c)
\caption{Time evolution of a random initial configuration of four particles. (a) The solid line represents the solution of the anisotropic first-order model \eqref{eqn:fo-bz}. The extension of the solution beyond breakdown times (indicated by squares) is explained in Section \ref{sect:breakdown}. On top of this plot we graph the solution of the relaxation model \eqref{eqn:so} for three values of $\ep$: $\ep=10^{-2},10^{-3}$, and $10^{-4}$. The plots are indistinguishable at the scale of the figure. (b) Zoomed images near two of the breakdown times of model \eqref{eqn:fo-bz}.  Note how the $\ep$-model \eqref{eqn:so} captures the discontinuities in velocity, as well as approximates solutions of \eqref{eqn:fo-bz} away from the jumps.
}%
\label{fig:trajectories full and zoom}%
\end{figure}

%%%%%%%%%%%%%%%%%%%%%%%%%%%%%%%%%%%%%%%%%%%%%%%%%%%%%%%%%%

\section{The anisotropic model \eqref{eqn:fo-bz}}
\label{sect:fom}
First, given a fixed spatial configuration $\{x_i\}_{i=1}^{N}$, we study the existence of a velocity field that satisfies the fixed point equation \eqref{eqn:vi-bz}. Then we study the dynamic evolution of solutions to \eqref{eqn:fo-bz}, initialized at some configuration  $x_i(0)= x_i^0$.

\subsection{The interaction kernel}
\label{sect:prelim}
As discussed in the Introduction, the system \eqref{eqn:fo-bz} with $g\equiv1$ is a well-established model, extensively studied in the last decade. The properties of the interaction potential $K$ are crucial for the well-posedness and the long-time behaviour of the solutions to \eqref{eqn:fo-nobz} (or \eqref{eq:aggre}). We are interested in this work in biologically relevant choices of $K$ which incorporate short-range repulsive and long-range attractive interactions. One such choice is the Morse potential~\cite{LeToBe2009,Chuang_etal}, which has the form
\begin{equation}
\label{eqn:Morse}
K(|x|) = -C_a e^{-|x|/l_a} + C_r e^{-|x|/l_r},
\end{equation}
with the constants $C_a$, $C_r$ and $l_a$, $l_r$ representing the strengths and ranges of the
attractive and repulsive interactions, respectively. The theoretical results in this paper apply both to the Morse potential, and to a large set of other choices of K (for example, power-laws with positive exponents and the antiderivative of the $\tanh$ function \cite{KoSuUmBe2011,Brecht_etal2011}).

The function $g$ that models the field of vision is the main new ingredient in this paper, and its
choice is far from unique.
Denote by $\phi_{ij}$  the angle between $x_j-x_i$ and $v_i$ -- see Figure~\ref{fig:vision}(a):
 \[
\dfrac{x_i-x_j}{|x_i-x_j|} \cdot \dfrac{v_i}{|v_i|} = - \cos \phi_{ij}.
\]
The weights $w_{ij} = g(-\cos \phi_{ij})$ should be the largest ($=1$) for $\phi_{ij}=0$ (the vectors $x_j-x_i$ and $v_i$ are parallel) and
the lowest (possibly $0$) for~$\phi_{ij}=\pi$ ($x_j-x_i$ and $v_i$ anti-parallel). Here are two choices of $g$ that capture this behaviour:
\begin{equation}
\label{eqn:gtanh}
g(-\cos\phi)=[\tanh(a(\cos\phi + 1 - b/\pi))+1]/c,
\end{equation}
with $c$ a normalization constant such that $g(-1)=1$, and
\begin{equation}
\label{eqn:glin}
g(-\cos\phi)= [a \cos\phi + b]/ (a+b).
\end{equation}
The $\tanh$ function \eqref{eqn:gtanh} is illustrated in Figure \ref{fig:vision}(b).  The function takes values close to $1$ in the field of vision (around $\phi=0$) and decays steeply toward the blind zone. These regions of high values, steep descent and low values are indicated in dark grey, light grey and white in Figure \ref{fig:vision}(a). In \eqref{eqn:gtanh} the parameter $a$ controls the steepness of the graph and $b$ controls its width (size of field of vision).

\begin{figure}%
\centering
\includegraphics[totalheight=0.3\textheight]{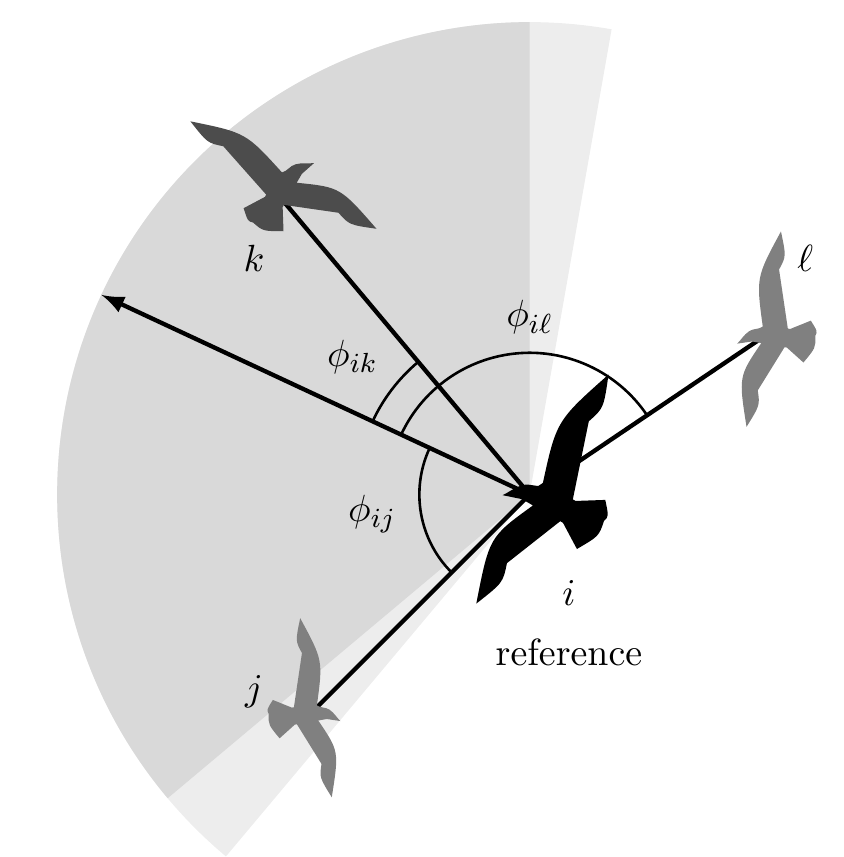}
$~~~$
\includegraphics[totalheight=0.25\textheight]{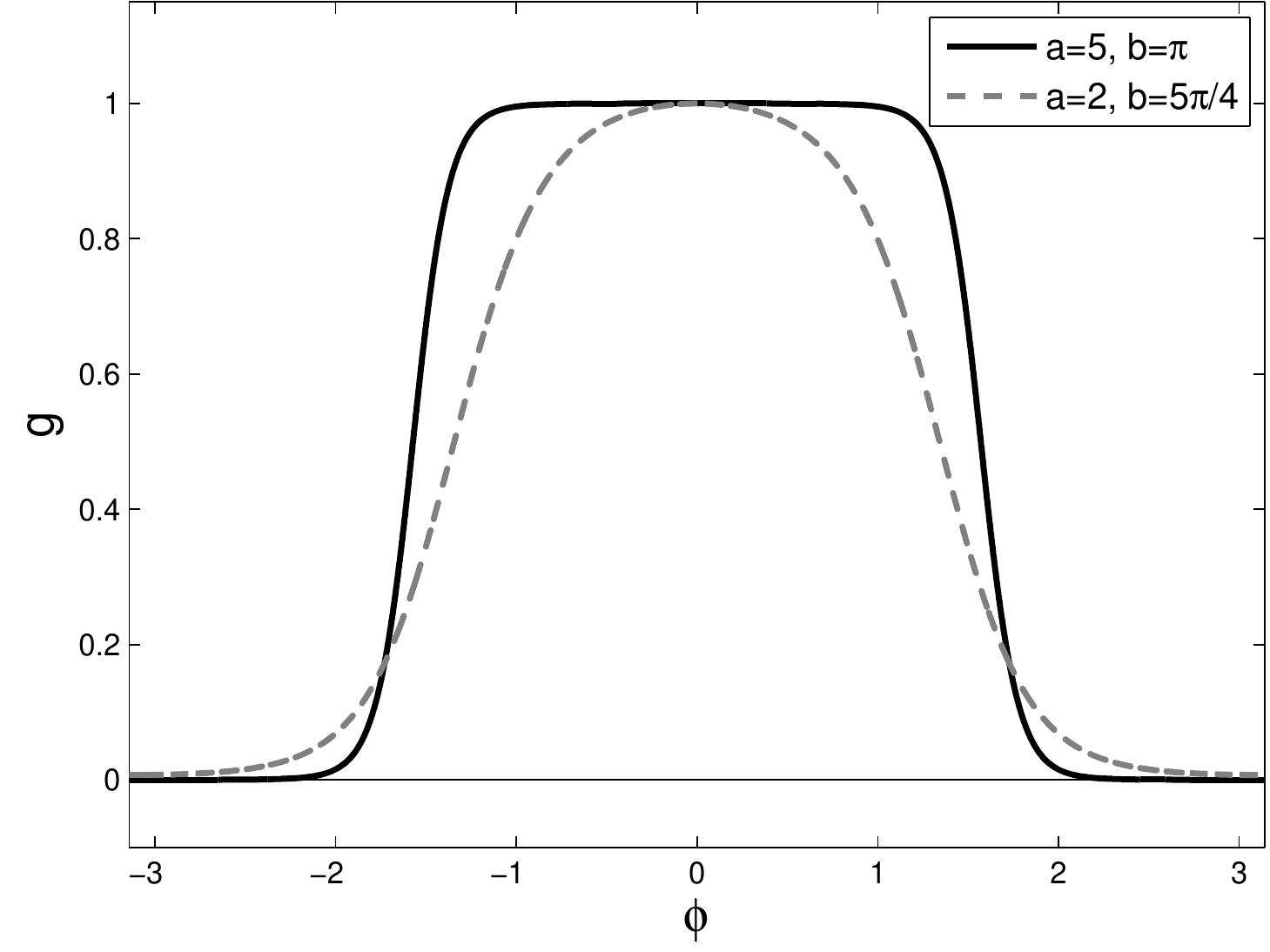}\\
 (a) \hspace{7.2cm} (b)
\caption{(a) An illustration of the visual perception of a reference individual $i$:
the field of vision (dark grey), the peripheral vision (light grey) and the blind zone (white).
        %The angle under which another individual $j$ or $k$ is perceived is taken with respect to the velocity of $i$.
         Interactions are weighted:
        % by a function $g$ that depends on $\cos\phi_{ij}$, $\cos\phi_{ik}$ or $\cos\phi_{i\ell}$, respectively. In this example the weights are ordered
$w_{ik}>w_{ij}>w_{i\ell}$. (b) The weight function $g$ given by \eqref{eqn:gtanh}. The following parameters are shown: $a=5$ and $b=\pi$ (solid), $a=2$ and $b=5\pi/4$ (dashed)  --- $a$ controls the steepness of the graph and $b$ controls its width. The function takes values close to $1$ in a region around $\phi=0$ (field of vision), has a steep decay to nearly $0$ in the peripheral vision, and takes negligible values near $\phi = \pm \pi$ (blind zone).}%
\label{fig:vision}%
\end{figure}

As we are not concerned in this paper with sharp analytical results,  we will assume that $K$ and $g$ satisfy enough properties for the analysis to be carried over simply and with the least technical difficulties. Some of the results can be obtained under weaker assumptions than others and this fact will be pointed out when appropriate. In general, the following assumptions on $K$ and $g$ are needed:
\begin{equation}
\label{eqn:hypK}
 K: \mathbb{R}^+ \to \mathbb{R} \quad \textrm{ is $C^2$, with bounded derivatives},
\end{equation}
and
\begin{equation}
\label{eqn:hypg}
g:[-1,1] \to [0,1]  \quad \textrm{ is } C^1 \textrm{ with a bounded derivative}.
\end{equation}
%Note that in particular, $g$ is Lipschitz continuous.

%%%%%%

\subsection{The implicit equation for $v_i$ -- existence and non-uniqueness}
\label{sect:fp}

Next, we investigate the existence and uniqueness of a fixed point of the implicit equation~\eqref{eqn:vi-bz} for $v_i$.
Note that particle stopping ($v_i=0$ for some $i$) is not well-defined for~\eqref{eqn:fo-bz}. The reason is that the field of vision of an individual is intrinsically defined in terms of its current direction of motion, along $v_i$. However, we observe in numerical simulations that particles do have
a tendency to stop. Stopping may occur for instance when a particle loses sense of the others, brakes down and stops before making a sudden turn to redirect itself toward the rest of the group. Or in an opposite situation, when a particle gets to a point where its repulsive interactions are dominant, and makes a turn to avoid getting too close to the rest. Such a sudden change in direction due to stopping is illustrated in Figure \eqref{fig:trajectories full and zoom}(c).

In order to deal with the stopping,
as is common in the ODE theory with discontinuous nonlinearities~\cite{Crandall-Liggett,Filippov1988},
we introduce a {\em generalized} definition of a fixed point of~\eqref{eqn:vi-bz}.
We will  regard $\sgn(z)$ as
a set-valued function given by the subdifferential of the Euclidean norm $|z|$:
\begin{equation}
  \sgn(z):=\partial |z| = \left\{
  \begin{array}{ll}
   \vspace{0.2cm}
    \frac{z}{|z|} & \quad \hbox{$z \neq0$,} \\
       \overline{B}(0,1) & \quad \hbox{$z=0$.}
  \end{array}
\right.
\end{equation}
Given a spatial configuration $\{x_i\}$, the resting scenario can now be considered as a solution of \eqref{eqn:vi-bz} if the following generalization of a solution is taken.
\begin{definition}[Generalized fixed point]\label{def generalization fixed point}
We call $v\in\R^d$ a \textit{generalized} solution of \eqref{eqn:vi-bz} if there exists an $s\in\sgn(v)$ such that
\begin{equation}
v = -\dfrac1N\displaystyle\sum_{j\neq i}\,\nabla_{x_i}K(|x_i-x_j|)\, g\left(\dfrac{(x_i-x_j)}{|x_i-x_j|}\cdot s \right).\label{generalized fixed point}
\end{equation}
\end{definition}
We show in Theorem \ref{theorem existence generalized fixed point} that the implicit equation \eqref{eqn:vi-bz} always has at least one generalized solution in the sense of Definition \ref{def generalization fixed point}. However,  as the next example shows, such solutions are
not expected to be unique.
% \begin{remark}
% Equivalently, a generalized solution $v$ can be defined by
% \begin{equation}
% \label{eqn:v-incl}
% v \in  -\dfrac1N\displaystyle\sum_{j\neq i}\,\nabla_{x_i}K(|x_i-x_j|)\, g\left(\dfrac{(x_i-x_j)}{|x_i-x_j|} \cdot \sgn(v) \right).
% \end{equation}
% Model \eqref{eqn:fo-bz} with \eqref{eqn:vi-bz} replaced by \eqref{eqn:v-incl} is a differential inclusion, however we do not use in this paper any tools from the theory of differential inclusions \cite{Filippov1988} and hence we do not emphasize this point further.
% \end{remark}

%%%

\subsubsection*{Non-uniqueness}

In order to show that solutions of \eqref{eqn:vi-bz} are generally non-unique, we look at a simple example in two dimensions ($d=2$) with four particles ($N=4$) situated at the four corners of a square, where {\em each} equation for $v_i$ ($i=1,\dots,4$) has three solutions. Hence, there are $81=3^4$ different combinations of $v_i$ that solve \eqref{eqn:vi-bz} ($i=1,\dots,4$)  for this particular example!
To be more precise, we take the anisotropy function $g$ to be linear, as in~\eqref{eqn:glin}: $g(s)=(1-s)/2$, and $K$ to be the Morse potential \eqref{eqn:Morse}. For such $K$, the derivative $K'(r)$ is negative (repulsive) at short distances and positive (attractive) at long ranges. It is easy to see that we can find $\beta>0$ with  $K'(\beta)<0$ ($\beta$ in the repulsive range) such that
\begin{equation}
K'(\beta) + \frac12\sqrt{2}\,K'(\beta\,\sqrt{2}) = 0.\label{eqn:find zero w beta +}
\end{equation}
Let now the four particles be located in the corners of a square of size $\beta$ (see Figure \ref{fig:Config non-uniqueness}):
\begin{equation*}
x_1=\frac{\beta}{2}\,{1\brack1},\,\,x_2=\frac{\beta}{2}\,{-1\brack1},\,\,x_3=\frac{\beta}{2}\,{-1\brack-1},\,\,x_4=\frac{\beta}{2}\,{1\brack-1}.
\end{equation*}
Then,  for each particle there are three admissible velocities in the sense of Definition \ref{def generalization fixed point}, as illustrated in Figure \ref{fig:Config non-uniqueness}. The first is  the stopping/zero velocity indicated by a circle. The second is a velocity vector pointing inward (toward the centre) and the third solution is a velocity pointing outward,  opposite in direction and equal in size to the previous.
\begin{figure}[ht]
\centering
\begin{tikzpicture}[scale=0.7, >= latex]
	\draw[->, gray!25!white, line width = 0.5mm] (0,-4)--(0,4);
	\draw[->, gray!25!white, line width = 0.5mm] (-4,0)--(6,0);

    \draw[dashed] (-2,-2) rectangle (2,2);

    \draw [gray!80!white, line width = 0.5mm] (2,2) circle (0.12cm) node[anchor=west,black]{$\hspace{1 mm}x_1$};
    \draw [gray!80!white, line width = 0.5mm, ->] (2.12,2.12) -- (3,3);
    \draw [gray!80!white, line width = 0.5mm, ->] (1.88,1.88) -- (1,1);

    \draw [gray!80!white, line width = 0.5mm] (-2,2) circle (0.12cm) node[anchor=east,black]{$x_2\hspace{1 mm}$};
    \draw [gray!80!white, line width = 0.5mm, ->] (-2.12,2.12) -- (-3,3);
    \draw [gray!80!white, line width = 0.5mm, ->] (-1.88,1.88) -- (-1,1);

    \draw [gray!80!white, line width = 0.5mm] (-2,-2) circle (0.12cm) node[anchor=east,black]{$x_3\hspace{1 mm}$};
    \draw [gray!80!white, line width = 0.5mm, ->] (-2.12,-2.12) -- (-3,-3);
    \draw [gray!80!white, line width = 0.5mm, ->] (-1.88,-1.88) -- (-1,-1);

    \draw [gray!80!white, line width = 0.5mm] (2,-2) circle (0.12cm) node[anchor=west,black]{$\hspace{1 mm}x_4$};
    \draw [gray!80!white, line width = 0.5mm, ->] (2.12,-2.12) -- (3,-3);
    \draw [gray!80!white, line width = 0.5mm, ->] (1.88,-1.88) -- (1,-1);

    \draw[<->] (4,-2)--(4,2)node[midway, right]{$\beta$};
	\draw[<->] (-2,-3)--(2,-3)node[midway, below]{$\beta$};
\end{tikzpicture}
\caption{Four particles positioned on the corners of a square of size $\beta$. Each of them has three admissible velocities (generalized solutions of \eqref{eqn:vi-bz}): one pointing inward, one pointing outward and $v=0$ (indicated by a circle).}
\label{fig:Config non-uniqueness}
\end{figure}
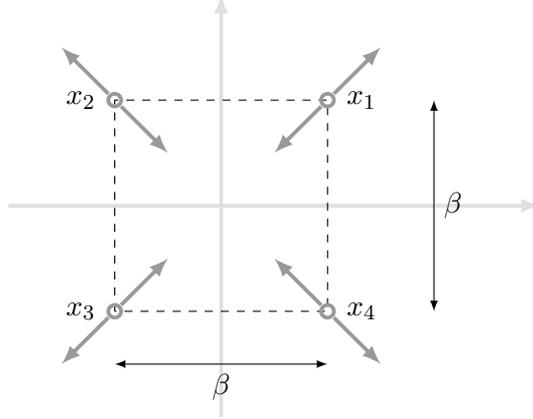
Indeed, consider, for instance, particle $1$ in this square configuration and the velocity equation \eqref{eqn:vi-bz} with $i=1$.
%The velocities
%\begin{align*}
%v_{1,1}&=0,\\
%v_{1,2}&=-\dfrac{1}{16}(\sqrt{2}-1)\,K'(\beta\,\sqrt{2})\,{1\brack1},\\
%v_{1,3}&=\dfrac{1}{16}(\sqrt{2}-1)\,K'(\beta\,\sqrt{2})\,{1\brack1},
%\end{align*}
%are all admissible.
We have
\begin{equation}
x_1-x_2=\beta\,{1\brack0},\,\,x_1-x_3=\beta\,{1\brack1},\,\,x_1-x_4=\beta\,{0\brack1},\label{eqn:positions corners beta}
\end{equation}
and
\begin{equation}
|x_1-x_2|=\beta,\,\,|x_1-x_3|=\beta\,\sqrt{2},\,\,|x_1-x_4|=\beta,\label{eqn:norms corners beta}
\end{equation}
so that
\begin{align}
\label{tilde F counterexample}
-\dfrac14  & \sum_{j\neq 1}  \nabla_{x_1}K(|x_1-x_j|)\,g\left(\dfrac{(x_1-x_j)}{|x_1-x_j|}\cdot s \right) \nonumber \\
&= -\dfrac 18\underbrace{\sum_{j\neq 1}\, K'(|x_1-x_j|)\,\dfrac{x_1-x_j}{|x_1-x_j|}}_{=0} + \dfrac 18\sum_{j\neq 1}\, K'(|x_1-x_j|)\,\dfrac{x_1-x_j}{|x_1-x_j|} \,\, \dfrac{(x_1-x_j)}{|x_1-x_j|}\cdot s,
\end{align}
for any $s\in\bar{B}(0,1)$. The first term on the right-hand side vanishes due
to~\eqref{eqn:find zero w beta +}, \eqref{eqn:positions corners beta} and~\eqref{eqn:norms corners beta}\footnote{This is equivalent to the fact that the square configuration with size $\beta$ is an equilibrium of the isotropic model ($ g \equiv 1$)}.
For $v=0$, $s=0$ is an element of $\sgn(v)$ and
\begin{equation*}
0=\dfrac 18\sum_{j\neq 1}\, K'(|x_1-x_j|)\,\dfrac{x_1-x_j}{|x_1-x_j|} \,\, \dfrac{(x_1-x_j)}{|x_1-x_j|}\cdot 0.
\end{equation*}
Hence, $v_{1}=0$ is a {\em generalized} fixed point of \eqref{eqn:vi-bz}.

We look now for a non-zero solution $v=(v^1,v^2)$ of  \eqref{eqn:vi-bz}. Given \eqref{tilde F counterexample}, we have to solve for $v$ from
\begin{align*}
v &= \dfrac 18\sum_{j\neq 1}\, K'(|x_1-x_j|)\,\dfrac{x_1-x_j}{|x_1-x_j|} \,\, \dfrac{(x_1-x_j)}{|x_1-x_j|}\cdot \dfrac{v}{|v|} \nonumber \\
&= \dfrac{1}{8|v|}\,\left(v^1\,K'(\beta)\,{1\brack0} + \frac12\,(v^1+v^2)\,K'(\beta\,\sqrt{2})\,{1\brack1} + v^2\,K'(\beta)\,{0\brack1} \right).
\end{align*}
Looking for a particular solution with $v^1=v^2$, we find
\begin{equation*}
v^1 = \dfrac{1}{8\sqrt{2}}\,\dfrac{v^1}{|v^1|}\,\left(K'(\beta) + K'(\beta\,\sqrt{2})\right),
\end{equation*}
or, in view of \eqref{eqn:find zero w beta +},
\begin{equation}
\label{eqn:v1-nonuniq}
|v^1| = \dfrac{\sqrt{2}}{16}\,\left(1-\dfrac12\sqrt{2}\right)K'(\beta\,\sqrt{2}).
\end{equation}
Note that since $K'(\beta\,\sqrt{2})>0$, the right-hand side of
\eqref{eqn:v1-nonuniq} is, indeed, positive. Hence, there two
(opposite in sign, but equal in magnitude) solutions for $v^1$. This
yields two velocity vectors as illustrated in Figure \ref{fig:Config
  non-uniqueness}. The same argument applies to the other particles
due to the rotational symmetry.

%%%

\subsubsection*{Existence}

We now prove the existence of a generalized solution of \eqref{eqn:vi-bz}.
% Here, $K$ and $g$ can satisfy weaker assumptions than \eqref{eqn:hypK} and \eqref{eqn:hypg} (see Lemma \ref{lemma Fa bounded cont} below).
Let $\{x_j\}_{j=1}^N\subset\R^d$ be a {\em fixed} set of distinct
positions and take a specific index $i\in\{1,\ldots,N\}$.
\begin{theorem}
\label{theorem existence generalized fixed point}
Assume that $\map{K}{\Rp}{\R}$ has a bounded derivative, and
$\map{g}{[-1,1]}{[0,1]}$ is continuous.  Then there exists a
generalized fixed point in the sense of Definition \ref{def
  generalization fixed point}.
\end{theorem}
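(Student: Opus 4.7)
The plan is to recast the problem as a Brouwer fixed point on the closed unit ball of admissible ``direction'' variables $s$. The key observation is that the right-hand side of \eqref{generalized fixed point} depends on $v$ only through the unit vector argument inside $g$, so we may treat it as a function of $s$ alone, ranging over the compact convex set $\overline{B}(0,1)$ where $\sgn$ takes its values.

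First, define the continuous map $\map{F}{\overline{B}(0,1)}{\R^d}$ by
\[
F(s) := -\dfrac{1}{N}\sum_{j\neq i} \nabla_{x_i}K(|x_i-x_j|)\, g\!\left(\dfrac{x_i-x_j}{|x_i-x_j|}\cdot s\right).
\]
Continuity of $F$ follows from continuity of $g$ on $[-1,1]$ and the fact that the positions $\{x_j\}$ are fixed and distinct (so the unit vectors $(x_i-x_j)/|x_i-x_j|$ are well defined). The boundedness of $K'$ together with $g\in[0,1]$ implies $F(\overline{B}(0,1))$ is bounded. A generalized fixed point as in Definition \ref{def generalization fixed point} is then exactly a pair $(v,s)$ with $v=F(s)$ and $s\in\sgn(v)$.

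Now split into two cases. If there exists $s^\ast\in\overline{B}(0,1)$ with $F(s^\ast)=0$, then $v=0$ together with this $s^\ast\in\overline{B}(0,1)=\sgn(0)$ is a generalized fixed point, and we are done. Otherwise $F(s)\neq 0$ for every $s\in\overline{B}(0,1)$, so the normalized map
\[
H(s) := \dfrac{F(s)}{|F(s)|}
\]
is continuous on $\overline{B}(0,1)$ and sends it into $S^{d-1}\subset\overline{B}(0,1)$. By Brouwer's fixed point theorem, $H$ has a fixed point $s^\ast\in\overline{B}(0,1)$. Setting $v:=F(s^\ast)\neq 0$, the identity $s^\ast=H(s^\ast)=v/|v|$ shows $s^\ast\in\sgn(v)$, and by construction $v=F(s^\ast)$, which is precisely \eqref{generalized fixed point}.

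The only subtle point is the dichotomy that allows circumventing the discontinuity of $v\mapsto v/|v|$ at the origin: working in the $s$-variable instead of $v$ keeps the domain compact and convex, and the ``stopping'' scenario $v=0$ is absorbed into the first case by virtue of the set-valued definition of $\sgn(0)$. I do not expect any analytic obstacle beyond identifying this correct formulation; once $F$ and $H$ are in place, Brouwer closes the argument immediately.
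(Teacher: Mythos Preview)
Your proof is correct and takes a genuinely different route from the paper. The paper regularizes the singularity at $v=0$ by replacing $v/|v|$ with $v/(\alpha+|v|)$, applies Brouwer to the resulting continuous map $\mathcal{F}^\alpha$ on a large ball in $\R^d$, and then sends $\alpha\to0$ along a subsequence, extracting a further subsequence of the normalized directions $v^{\alpha_n}/(\alpha_n+|v^{\alpha_n}|)$ in the case that the limit velocity is zero. You instead bypass the regularization entirely by working from the start in the compact convex set $\overline{B}(0,1)$ of direction variables~$s$, and your dichotomy (either $F$ vanishes somewhere, or $F/|F|$ is globally defined and Brouwer applies to it) handles the stopping case cleanly without any limiting procedure. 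Your argument is shorter and more elementary; the paper's approach, on the other hand, is thematically aligned with the relaxation philosophy used throughout (small-parameter regularization followed by a limit), and its intermediate objects $v^{\alpha}$ are closer in spirit to the $\ep$-dynamics studied later.
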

\begin{proof}
To deal with
the singularity of \eqref{eqn:vi-bz} at $v=0$, we   use a
regularization. For any $\alpha>0$, define the mapping
$\map{\Fa}{\R^d}{\R^d}$ by
\begin{equation}
\label{eqn:Falpha}
\Fa(v) := -\dfrac1N\sum_{j\neq i}\, \nabla_{x_i}K(|x_i-x_j|)\,\, g\left(\dfrac{(x_i-x_j)}{|x_i-x_j|}\cdot \dfrac{v}{\alpha+|v|} \right),~~\hbox{for all $v\in\R^d$.}
\end{equation}
% \begin{lemma}
% \label{lemma Fa bounded cont}
% Assume that $\map{K}{\Rp}{\R}$ has a bounded derivative, and $\map{g}{[-1,1]}{[0,1]}$ is continuous. Then, for any $\alpha>0$, the map $\Fa$ has a fixed point in $\overline{B(0,\rho)}$, where $\rho:=\|K'\|_{\infty}\,\|g\|_{\infty}$.
% \end{lemma}
This map is continuous and uniformly bounded on $\R^d$, with
\begin{equation*}
\label{bound on F}
|\Fa(v)| \leqslant \|K'\|_{\infty}\,\|g\|_{\infty}, \qquad \text {for all } v\in \mathbb{R}^d.
\end{equation*}
Brouwer's Fixed Point Theorem implies that $\Fa$ has a fixed point
(which depends on $\alpha$) in the closed ball ${{B(0,\rho)}}$
where~$\rho:=\|K'\|_{\infty}\,\|g\|_{\infty}$. We now show that a
generalized fixed point satisfying~\eqref{generalized fixed point} can
be obtained by passing to the limit $\alpha\downarrow0$.  Assume
that~$\alpha_n\to 0$, with~$\alpha_n>0$, and let~$\{v^{\alpha_n}\}$ be
a corresponding set of fixed points of $\Fan$:
\begin{equation}
\label{fixed point Fbak}
v^{\alpha_n} = -\dfrac1N\sum_{j\neq i}\, \nabla_{x_i}K(|x_i-x_j|)\,\, g\left(\dfrac{(x_i-x_j)}{|x_i-x_j|}\cdot \dfrac{v^{\alpha_n}}{\alpha_n+|v^{\alpha_n}|} \right).
\end{equation}
Since $|v^{\alpha_n}|\le\rho$ is uniformly bounded, $\{v^{\alpha_n}\}$
converges along a subsequence. For convenience, relabel this subsequence
as $\{v^{\alpha_{n}}\}$ and define its limit:
\[
v = \lim_{n\to\infty} v^{\alpha_n}.
\]
If $v\neq 0$, then, as
\[
\lim_{n\to \infty}
\frac{v^{\alpha_n}}{\alpha_n+|v^{\alpha_n}|} = \frac{v}{|v|},
\]
we can simply pass the limit $n \to \infty$ in
the fixed point equation \eqref{fixed point Fbak}, and conclude that $v$
is a fixed point.

On the other hand, if $v= 0$, we set
\begin{equation*}
w_n=\frac{v^{\alpha_n}}{\alpha_n+|v^{\alpha_n}|} ,
\end{equation*}
and note that $|w_n|<1$.
Thus, up to extraction of a subsequence, $w_n$ converges to a limit
\[
s = \lim_{k \to \infty} w_{n_k},
\]
with $|s|\le 1$.  Sending $k \to \infty$ in the fixed point equation
\eqref{fixed point Fbak} for $v^{\alpha_{n_k}}$ we find that $v=0$
satisfies \eqref{generalized fixed point}, with
$s\in\overline{B(0,1)}=\sgn(v)$.
\end{proof}

%%%%%%%%%%%%%%%%%%%%

\subsection{Local continuity of trajectories}
\label{sec local continuity trajectories}
Given that a velocity field always exists for a given configuration,
we study now the local existence of continuous solutions to
\eqref{eqn:fo-bz}. We denote by bold characters $\bf{x}$ and $\bf{v}$
the concatenation of all particles' locations and velocities,
respectively, i.e.,
\[
\mathbf{x} = (x_1,\dots,x_N), \quad \mathbf{v} = (v_1,\dots,v_N).
\]
To rule out issues such as collisions or particle stopping, we look
for solutions $(\mathbf{x}(t),\mathbf{v}(t))$, in the set
\begin{equation}
\label{eqn:calR}
\calR:=\left\{(x_1,\ldots,x_N,v_1,\ldots,v_N)\in\R^{2Nd}: |x_i-x_j|>\lambda \text{ if } i\neq j, |v_i|>\mu\right\},
\end{equation}
for fixed $\lambda,\mu>0$.

The implicit equation \eqref{eqn:vi-bz} for $v_i$ does not depend on
the velocities $v_j$ of the other particles $j \neq i$.  This
motivates the definition of
\[
\mathcal{F}_i:\calRtilde\to\R^d,
\]
\begin{equation}
\mathcal{F}_i(\mathbf{x},v)=\mathcal{F}_i(x_1,\ldots,x_N,v):=-v -\dfrac1N\displaystyle\sum_{j\neq i}\,\nabla_{x_i}K(|x_i-x_j|)\, g\left(\dfrac{(x_i-x_j)}{|x_i-x_j|}\cdot\dfrac{v}{|v|} \right),\label{eqn F zero in fixed point}
\end{equation}
where
\begin{equation}
\calRtilde:=\left\{(x_1,\ldots,x_N,v)\in\R^{Nd}\times\R^d: |x_i-x_j|>\lambda \text{ if } i\neq j, |v|>\mu\right\},
\end{equation}
for any $\lambda,\mu\geq0$.  For a given configuration $\mathbf{x}$,
the velocity $v_i$ is among the zeros of
$\mathcal{F}_i(\mathbf{x},\cdot)$ regarded as a function of $v$.
The implicit function theorem implies immediately the following.
\begin{theorem}[Local continuity]\label{theorem local continuity of trajectories}
Assume that at time $\tau$ the phase space configuration $(\mathbf{x}(\tau),\mathbf{v}(\tau))\in\calR$, with $\mathcal{F}_i(\mathbf{x}(\tau),v_i(\tau))=0$ for all $i\in\{1,\ldots,N\}$, satisfies
\[
\det D_v\mathcal{F}_i(\mathbf{x}(\tau),v_i(\tau)) \neq0 \quad \textrm{ for all } i\in\{1,\ldots,N\}.
\]
Then there is a $\Delta\tau>0$ such that the system
\begin{equation}
\label{eqn:fo-short}
\left\{
  \begin{array}{l}
    \dfrac{d\mathbf{x}}{dt} = \mathbf{v},\\\\
    \mathcal{F}_i(\mathbf{x},v_i)=0, \text{ for all }i\in\{1,\ldots,N\},
  \end{array}
\right.
\end{equation}
has a unique (local) solution $(\mathbf{x},\mathbf{v}):(\tau-\Delta\tau,\tau+\Delta\tau)\to\calR$ that is continuous and that passes through $(\mathbf{x}(\tau),\mathbf{v}(\tau))$ at time $\tau$.
\end{theorem}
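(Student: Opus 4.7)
The strategy is to reduce the differential-algebraic system \eqref{eqn:fo-short} to a standard ODE by using the implicit function theorem to solve each algebraic constraint $\mathcal{F}_i(\mathbf{x},v_i)=0$ for $v_i$ as a $C^1$ function of $\mathbf{x}$ in a neighborhood of $\mathbf{x}(\tau)$. Once this is done, Picard--Lindel\"of delivers the local existence and uniqueness, and a continuity argument confirms the solution stays in $\calR$.

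Concretely, I would first verify the regularity of $\mathcal{F}_i$ on $\calRtilde$. The hypotheses \eqref{eqn:hypK}--\eqref{eqn:hypg} make $K'$ a $C^1$ function and $g$ a $C^1$ function; the factor $v/|v|$ appearing inside $g$ is smooth away from $v=0$, which is excluded by $|v|>\mu$. Hence $\mathcal{F}_i$ is $C^1$ on $\calRtilde$. Since $(\mathbf{x}(\tau),v_i(\tau))\in\calRtilde$ and $\det D_v \mathcal{F}_i(\mathbf{x}(\tau),v_i(\tau))\neq 0$ by assumption, the classical implicit function theorem yields an open neighborhood $U_i\subset\R^{Nd}$ of $\mathbf{x}(\tau)$ and a $C^1$ map $V_i:U_i\to\R^d$ with $V_i(\mathbf{x}(\tau))=v_i(\tau)$ and $\mathcal{F}_i(\mathbf{x},V_i(\mathbf{x}))=0$ for all $\mathbf{x}\in U_i$, and moreover $V_i$ is the unique such solution in a neighborhood of $(\mathbf{x}(\tau),v_i(\tau))$.

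Intersecting the neighborhoods for $i=1,\dots,N$, I obtain a common open set $U\subset\R^{Nd}$ on which the vector-valued map $\mathbf{V}(\mathbf{x}):=(V_1(\mathbf{x}),\dots,V_N(\mathbf{x}))$ is $C^1$. The system \eqref{eqn:fo-short} is then equivalent, in a neighborhood of $(\mathbf{x}(\tau),\mathbf{v}(\tau))$, to the ordinary differential equation
\begin{equation*}
\frac{d\mathbf{x}}{dt}=\mathbf{V}(\mathbf{x}),\qquad \mathbf{x}(\tau)=\mathbf{x}(\tau),
\end{equation*}
with $C^1$ (hence locally Lipschitz) right-hand side. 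Picard--Lindel\"of produces a unique $C^1$ solution $\mathbf{x}:(\tau-\Delta\tau,\tau+\Delta\tau)\to U$ for some $\Delta\tau>0$, and setting $\mathbf{v}(t):=\mathbf{V}(\mathbf{x}(t))$ gives a continuous velocity satisfying the constraints.

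The remaining point, which is the only mildly delicate one, is to shrink $\Delta\tau$ if necessary so that $(\mathbf{x}(t),\mathbf{v}(t))$ actually remains in $\calR$ --- i.e., the minimum inter-particle distance stays above $\lambda$ and each $|v_i(t)|$ stays above $\mu$. Both conditions hold strictly at $t=\tau$ by the assumption $(\mathbf{x}(\tau),\mathbf{v}(\tau))\in\calR$, and the functions $t\mapsto|x_i(t)-x_j(t)|$ and $t\mapsto|v_i(t)|=|V_i(\mathbf{x}(t))|$ are continuous, so by an open-condition/continuity argument they remain bounded below by the required thresholds on a possibly smaller interval. Uniqueness of the trajectory in $\calR$ follows from the uniqueness given by the implicit function theorem together with Picard--Lindel\"of. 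The main obstacle, albeit minor, is really the bookkeeping involved in combining the $N$ implicit function theorem neighborhoods and ensuring the constraints defining $\calR$ are preserved; no hard analysis is required beyond that.
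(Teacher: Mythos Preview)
Your proposal is correct and follows essentially the same route as the paper: apply the implicit function theorem to each $\mathcal{F}_i$ to obtain $C^1$ maps $V_i$ (the paper calls them $\gamma_i$), intersect the neighborhoods, and apply Picard--Lindel\"of to $d\mathbf{x}/dt=\mathbf{V}(\mathbf{x})$. If anything, you are slightly more careful than the paper, which omits the verification that $\mathcal{F}_i$ is $C^1$ and the continuity argument ensuring the solution remains in $\calR$.
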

 \begin{proof}
 The proof is elementary. From the implicit function theorem, for each $i\in\{1,\ldots,N\}$ there exists an open set $W_i$ and a unique map $\gamma_i\in C^1(W_i;\R^d)$, such that $\mathbf{x}(\tau)\in W_i$, $v_i(\tau)=\gamma_i(\mathbf{x}(\tau))$, and $\mathcal{F}_i(\mathbf{x},\gamma_i(\mathbf{x}))=0$ for all $\mathbf{x}\in W_i$. Define $\Gamma(\mathbf{x}):=(\gamma_1(\mathbf{x}),\ldots,\gamma_N(\mathbf{x}))$ on a closed bounded subset of $\Omega:=\bigcap_{i=1}^NW_i$.  Since $\Gamma$ is  $C^1$, it  is Lipschitz continuous, and the theorem follows from the Picard-Lindelöf Theorem (cf.~\cite[Theorem 2.2]{Teschl}) applied to the system  \begin{equation*}
     \dfrac{d\mathbf{x}}{dt} = \Gamma(\mathbf{x}).
 \end{equation*}
 \end{proof}
\begin{remark} \label{remark:unique}
Given a space configuration $\mathbf{x}^\ast$ and a corresponding
velocity $\mathbf{v}^\ast=(v_1^\ast,\dots,v_N^\ast)$, i.e.,
$\mathcal{F}_i(\mathbf{x}^\ast,v_i^\ast)=0$ for all
$i\in\{1,\ldots,N\}$,
the non-vanishing determinant condition
\[
\det D_v\mathcal{F}_i(\mathbf{x}^\ast,v_i^\ast) \neq0 \quad \textrm{ for all } i\in\{1,\ldots,N\}
\]
guarantees that the fixed point $\mathbf{v}^\ast$ is {isolated}, and
Theorem \ref{theorem local continuity of trajectories}
provides a unique solution of \eqref{eqn:fo-bz} starting at
configuration $\mathbf{x}^\ast$ in the direction
$\mathbf{v}^\ast$. There could be multiple
velocities~$\mathbf{v}^\ast$ corresponding to the same configuration
$\mathbf{x}^\ast$ but as long as such a velocity vector is isolated,
there exists a unique continuous trajectory through $\mathbf{x}^\ast$ in its
direction. This will be revisited in
Section~\ref{sect:conv} in connection with the $\ep \to 0$ limit of
the relaxation system \eqref{eqn:so}.
\end{remark}

\begin{remark} \label{remark:extend}
The (local) continuous solutions provided by Theorem \ref{theorem
  local continuity of trajectories} can be extended in time for as
long as we do not encounter collisions or particle stopping (see
definition \eqref{eqn:calR} of  $\calR$) {\em and} the Jacobian
matrices $D_v\mathcal{F}_i$ remain invertible along the
trajectory. Ruling out collisions and stopping, we conclude that model
\eqref{eqn:fo-bz} has a unique solution that is continuous in position
and velocity \textit{up to the moment} when $\det D_v\mathcal{F}_i=0$
for some $i$. Numerical experiments in Section \ref{sect:breakdown} show that, in the absence of collisions or stopping, discontinuities in velocities occur indeed at such times. To deal with such velocity jumps, both analytically and numerically,  we resort to the relaxation model \eqref{eqn:so} (Sections \ref{sect:conv} and \ref{sect:breakdown}).
 \end{remark}

%%%

\subsubsection*{The two-dimensional case}\label{sec intro F R and dF/dtheta}

We now apply the above considerations above to two dimensions to show that
the non-zero determinant condition can be reduced to a very simple
scalar form.
Assume that the configuration $\{x_1,\ldots,x_N\}\subset\R^2$ is
given, and that we search for a nonzero solution of \eqref{eqn:vi-bz}.
Using the polar coordinate representation
$v_i=r_i[\cos\theta_i,\sin\theta_i]^T$, we write \eqref{eqn:vi-bz} as
\begin{equation}\label{eqn fixed point polar coordinates}
r_i{\cos\theta_i\brack\sin\theta_i} = -\dfrac1N\displaystyle\sum_{j\neq i}\nabla_{x_i}K(|x_i-x_j|)\,g\left(\dfrac{x_i-x_j}{|x_i-x_j|}\cdot{\cos\theta_i\brack\sin\theta_i}\right).
\end{equation}
Taking the inner product with $[-\sin\theta_i,\cos\theta_i]^T$ and
$[\cos\theta_i,\sin\theta_i]^T$, the vector equation \eqref{eqn fixed
  point polar coordinates} can be written as
\begin{equation}\label{eqn decomposition F R polar coordinates}
\left\{
  \begin{array}{l}
    0 = -\dfrac1N\displaystyle\sum_{j\neq i}\nabla_{x_i}K(|x_i-x_j|)\cdot{-\sin\theta_i\brack\cos\theta_i}\,g\left(\dfrac{x_i-x_j}{|x_i-x_j|}\cdot{\cos\theta_i\brack\sin\theta_i}\right),
    \\
    r_i = -\dfrac1N\displaystyle\sum_{j\neq i}\nabla_{x_i}K(|x_i-x_j|)\cdot{\cos\theta_i\brack\sin\theta_i}\,g\left(\dfrac{x_i-x_j}{|x_i-x_j|}\cdot{\cos\theta_i\brack\sin\theta_i}\right).
  \end{array}
\right.
\end{equation}
The advantage of the polar coordinates is that the first equation in
\eqref{eqn decomposition F R polar coordinates} is for $\theta_i$ {\em
  only}. Define the following functions ($i=1,\dots,N$):
\begin{subequations}
\label{eqn:FR}
\begin{align}
H_i(\theta)&=-\dfrac1N\displaystyle\sum_{j\neq i}\nabla_{x_i}K(|x_i-x_j|)\cdot{-\sin\theta \brack\cos\theta}\,g\left(\dfrac{x_i-x_j}{|x_i-x_j|}\cdot{\cos\theta \brack\sin\theta}\right),\\
R_i(\theta)&=-\dfrac1N\displaystyle\sum_{j\neq i}\nabla_{x_i}K(|x_i-x_j|)\cdot{\cos\theta \brack\sin\theta}\,g\left(\dfrac{x_i-x_j}{|x_i-x_j|}\cdot{\cos\theta \brack\sin\theta}\right).
\end{align}
\end{subequations}

Hence, solving for $\theta_i$ and $r_i$ from \eqref{eqn decomposition
  F R polar coordinates} is equivalent to finding a root $\theta_i$ of $H_i$ and then setting $r_i$ explicitly:
%and setting~$r_i=R_i(\theta_i)$:
 \begin{equation}
 \label{eqn:HRroots}
 H_i(\theta_i)=0, \qquad r_i = R_i(\theta_i).
 \end{equation}
Note that a root $\theta_i$ of $H_i$  generates a (non-zero)
admissible velocity if $R_i(\theta_i)>0$.
%The decomposition \eqref{eqn decomposition F R polar coordinates} is useful in the sense that it reduces \eqref{eqn fixed point polar coordinates} to a scalar nonlinear equation $F_i(\theta_i)=0$ and an accompanying definition of $r_i$ in terms of $\theta_i$.\\

For a fixed $\mathbf{x}$, we introduce the notation
\begin{equation}
\label{eqn:Fpolar}
\tilde{\mathcal{F}}_i\left(r,\theta\right):=\mathcal{F}_i\left(\mathbf{x},r{\cos\theta\brack\sin\theta}\right)=\left(
\begin{array}{cc}
\cos\theta & -\sin\theta \\
\sin\theta & \cos\theta \\
\end{array}
\right) \cdot
{-r+R_i(\theta) \brack H_i(\theta)},
\end{equation}
with $\mathcal{F}_i$ defined in \eqref{eqn F zero in fixed
  point}.
% It follows from \eqref{eqn decomposition F R polar coordinates} and \eqref{eqn:FR} that
% \begin{equation}\label{eqn F polar is matrix times F R}
% \tilde{\mathcal{F}}_i\left(r,\theta\right) =
% \left(
% \begin{array}{cc}
% \cos\theta & -\sin\theta \\
% \sin\theta & \cos\theta \\
% \end{array}
% \right) \cdot
% {-r+R_i(\theta) \brack H_i(\theta)}.
% \end{equation}
The chain rule  yields
\begin{equation}
\label{eqn matrix product Jacobians coordinate transform}
D_{(r,\theta)}\tilde{\mathcal{F}}_i = D_v\mathcal{F}_i\cdot D_{(r,\theta)}v,
\end{equation}
where $D_{(r,\theta)}v$ is the Jacobian matrix of the coordinate transform.
%Equation \eqref{eqn matrix product Jacobians coordinate transform} is shorthand notation for
%\begin{equation*}
%\label{eqn matrix product Jacobians full}
%\left(
%\begin{array}{cc}
%\pd{\tilde{\mathcal{F}}_i^{1}}{r} & \pd{\tilde{\mathcal{F}}_i^{1}}{\theta} \\
%\pd{\tilde{\mathcal{F}}_i^{2}}{r} & \pd{\tilde{\mathcal{F}}_i^{2}}{\theta} \\
%\end{array}
%\right) =
%\left(
%\begin{array}{cc}
%\pd{\mathcal{F}_i^{1}}{v^{1}} & \pd{\mathcal{F}_i^{1}}{v^{2}} \\
%\pd{\mathcal{F}_i^{2}}{v^{1}} & \pd{\mathcal{F}_i^{2}}{v^{2}}\\
%\end{array}
%\right) \cdot
%\left(
%\begin{array}{cc}
%\cos\theta & -r\sin\theta\\
%\sin\theta & r\cos\theta\\
%\end{array}
%\right) .
%\end{equation*}
Differentiating in \eqref{eqn:Fpolar}, we find
that the second column of $D_{(r,\theta)}\tilde{\mathcal{F}}_i$ is
\begin{equation}\label{eqn second column Jacobian F}
\pd{\tilde{\mathcal{F}}_i}{\theta}=
\left(
\begin{array}{cc}
\cos\theta & -\sin\theta \\
\sin\theta & \cos\theta \\
\end{array}
\right) \cdot
{R'_i(\theta) \brack H'_i(\theta)} +
\left(
\begin{array}{cc}
-\sin\theta & -\cos\theta \\
\cos\theta & -\sin\theta \\
\end{array}
\right) \cdot
{-r+R_i(\theta) \brack H_i(\theta)}.
\end{equation}
Let $(\mathbf{x}^\ast,v^\ast)\in\calRtilde$ satisfy
$\mathcal{F}_i(\mathbf{x}^\ast,v^\ast)=0$, so that
$\tilde{\mathcal{F}}_i\left(r^\ast,\theta^\ast\right)=0$, whence
\begin{equation}\label{eqn R F = 0 polar}
{-r^\ast+R_i(\theta^\ast) \brack H_i(\theta^\ast)}=0.
\end{equation}
Thus, we have
\begin{equation}\label{eqn F polar Jacobian is matrix times matrix R' F'}
D_{(r,\theta)}\tilde{\mathcal{F}}_i =
\left(
\begin{array}{cc}
\cos\theta^\ast & -\sin\theta^\ast \\
\sin\theta^\ast & \cos\theta^\ast \\
\end{array}
\right)
%\cdot
\left(
\begin{array}{cc}
-1 & R'_i(\theta^\ast) \\
0 & H'_i(\theta^\ast) \\
\end{array}
\right).
\end{equation}
Finally, taking the determinant on both sides
of \eqref{eqn matrix product Jacobians coordinate transform} and using
\eqref{eqn F polar Jacobian is matrix times matrix R' F'}, we obtain
%\begin{equation}
%|\det D_{(r,\theta)}\tilde{\mathcal{F}}_i| = |\det D_v\mathcal{F}_i|\,|\det D_{(r,\theta)}v|,
%\end{equation}
%or, using \eqref{eqn matrix product Jacobians full} and \eqref{eqn F polar Jacobian is matrix times matrix R' F'},
\begin{equation}
|H'_i(\theta^\ast)|=r^\ast\,|\det D_v\mathcal{F}_i(\mathbf{x}^\ast,v^\ast)|.
\end{equation}
The condition $\det D_v\mathcal{F}_i(\mathbf{x}^\ast,v^\ast)\neq0$ is
thus equivalent to $H'_i(\theta^\ast)\neq0$. In other words, in two
dimensions, the continuity issues are only to be expected either when
$H'_i$ becomes zero, or when trajectories reach the boundary of
$\calR$ (particles collide or one of the velocities reaches zero).

%%%%%%%%%%%%%%%%%%%%%

\section{Relaxation model \eqref{eqn:so}: convergence for $\ep\to0$}
\label{sect:conv}

In this section we investigate the relaxation system
\eqref{eqn:so}. We note that this system is locally well-posed, and
explain in what sense solutions of \eqref{eqn:so} converge to
those of \eqref{eqn:fo-bz} as $\ep\to 0$.

%%%%%%%%%%%

\subsection{Convergence of solutions as $\ep \to 0$}
\label{subsect:conv}

As opposed to \eqref{eqn:fo-bz}, the regularized system
\eqref{eqn:so} has {\em unique} solutions (locally), for each
$\ep>0$, provided that $K'$ and $g$ are bounded and Lipschitz continuous.
% \begin{proposition}
% \label{prop:so-wp}
% Assume $K'$ and $g$ are bounded and Lipschitz continuous, with
% Lipschitz constants $|K'|_L$ and $|g|_L$, respectively. Then for each
% $\ep>0$, the system \eqref{eqn:so} completed with initial conditions
% $(\mathbf{x},\mathbf{v})|_{t=t_0}\in\calR$ has a unique local solution
% in $\calR$ on an interval around $t_0$.
% \end{proposition}
% \begin{proof}
% The proof is entirely elementary and is sketched in Appendix \ref{appendix:so-wp}. All it amounts to is to infer local well-posedness from Picard-Lindelöf Theorem by showing that the right-hand-side of \eqref{eqn:so} is Lipschitz (see Appendix \ref{appendix:so-wp}).
% \end{proof}
We now apply the theory developed by Tikhonov
\cite{Tikhonov1952,Vasileva1963} to study the limit $\ep \to 0$ of
solutions to \eqref{eqn:so}.  We start by paraphrasing some of the
results presented in \cite{Vasileva1963}.
Consider the system of equations
\begin{equation}
\label{eqn shorthand system of equations second-order}
\left\{
  \begin{array}{l}
    \dfrac{d\mathbf{x}}{dt} = \mathbf{v},\\\\
    \ep\dfrac{d\mathbf{v}}{dt} = \mathcal{F}(\mathbf{x},\mathbf{v}),
  \end{array}
\right.
\end{equation}
where $\mathbf{x},\mathbf{v} \in \mathbb{R}^{Nd}$ and $\ep>0$ is a small parameter.
On a closed and bounded set $D\subset\R^{Nd}$,
 let $\Gamma:D\to\R^{Nd}$ be such that $\mathbf{v}=\Gamma(\mathbf{x})$ is a solution of the system of equations
\begin{equation}\label{eqn F implicit}
\mathcal{F}(\mathbf{x},\mathbf{v})=0.
\end{equation}
The function $\Gamma$ is called a \textit{root} of \eqref{eqn F implicit}. The system
\begin{equation}\label{eqn shorthand system of equations first-order}
\left\{
  \begin{array}{l}
    \dfrac{d\mathbf{x}}{dt} = \mathbf{v},\\\\
    \mathbf{v} = \Gamma(\mathbf{x}),
  \end{array}
\right.
\end{equation}
is called \textit{the degenerate system of equations corresponding to the root $\mathbf{v}=\Gamma(\mathbf{x})$} .

Note that the systems of our interest \eqref{eqn:fo-bz} and \eqref{eqn:so} can be written in the short-hand notation \eqref{eqn shorthand system of equations first-order} and \eqref{eqn shorthand system of equations second-order}, respectively. Indeed, define $\mathcal{F}:\calR\to\R^{Nd}$ using \eqref{eqn F zero in fixed point} as
\begin{equation}\label{eqn def cal F concatenation cal Fi}
\mathcal{F}(\mathbf{x},\mathbf{v}):=\left(\mathcal{F}_1(\mathbf{x},v_1),\ldots,\mathcal{F}_N(\mathbf{x},v_N)\right),
\end{equation}
for all $(\mathbf{x},\mathbf{v})\in\calR$. Then, \eqref{eqn:so} can be written compactly as \eqref{eqn shorthand system of equations second-order}, and  \eqref{eqn:fo-bz} is a {\em degenerate} system in the form  \eqref{eqn shorthand system of equations first-order}, with function $\Gamma =(\gamma_1,\dots,\gamma_N)$ provided by the implicit function theorem (see Theorem \ref{theorem local continuity of trajectories} and its proof).

\begin{definition}[Isolated root]
The root $\Gamma$ is called \textit{isolated} if there is a $\delta>0$ such that for all $\mathbf{x}\in D$ the only element in $B(\Gamma(\mathbf{x}),\delta)$ that satisfies $\mathcal{F}(\mathbf{x},\mathbf{v})=0$ is $\mathbf{v}=\Gamma(\mathbf{x})$.
\end{definition}

\begin{definition}[Adjoined system and positive stability]\label{def pos stab}
For fixed $\mathbf{x}^\ast$, the system
\begin{equation}\label{eqn adjoined system}
\dfrac{d\mathbf{v}}{d\tau}=\mathcal{F}(\mathbf{x}^\ast,\mathbf{v}),
\end{equation}
is called the \textit{adjoined system of equations}. An isolated root $\Gamma$ is called \textit{positively stable} in $D$, if $\mathbf{v}^\ast=\Gamma(\mathbf{x}^\ast)$ is an asymptotically stable stationary point of \eqref{eqn adjoined system} as $\tau\to\infty$, for each $\mathbf{x}^\ast\in D$.
\end{definition}

\begin{definition}[Domain of influence]
The \textit{domain of influence} of an isolated positively stable root $\Gamma$ is the set of points $(\mathbf{x}^\ast,\tilde{\mathbf{v}})$ such that the solution of \eqref{eqn adjoined system} satisfying $\mathbf{v}|_{\tau=0}=\tilde{\mathbf{v}}$ tends to $\mathbf{v}^\ast = \Gamma(\mathbf{x}^\ast)$ as $\tau\to\infty$.
\end{definition}
The following theorem, due to Tikhonov  \cite{Tikhonov1952},  states under which conditions and in what sense solutions of \eqref{eqn shorthand system of equations second-order} converge to solutions of the (degenerate) system \eqref{eqn shorthand system of equations first-order}.

\begin{theorem}[see \cite{Tikhonov1952} or \cite{Vasileva1963}, Thm. 1.1] \label{thm:Tikhonov}
Assume that $\Gamma$ is an isolated positively stable root of \eqref{eqn F implicit} in some bounded closed domain $D$. Consider a point $(\mathbf{x}_0,\mathbf{v}_0)$ in the domain of influence of this root, and assume that the degenerate system  \eqref{eqn shorthand system of equations first-order} has a solution
 $\mathbf{x}(t)$ initialized at $\mathbf{x}(t_0) = \mathbf{x}_0$, that lies in $D$ for all $t\in[t_0,T]$. Then, as $\ep\to 0$, the solution
$(\mathbf{x}^\ep (t),\mathbf{v}^\ep (t))$
of \eqref{eqn shorthand system of equations second-order} initialized at $(\mathbf{x}_0,\mathbf{v}_0)$, converges to $(\mathbf{x}(t),\mathbf{v}(t)):=(\mathbf{x}(t),\Gamma(\mathbf{x}(t)))$ in the following sense:

%\begin{enumerate}
(i) $\displaystyle\lim_{\ep\to 0}\mathbf{v}^\ep(t)=\mathbf{v}(t)$ \text{ for all } $t\in(t_0,T^*]$, \text {and }

(ii) $\displaystyle\lim_{\ep\to 0}\mathbf{x}^\ep(t)=\mathbf{x}(t)$ \text{ for all } $t\in[t_0,T^*]$,

%\end{enumerate}
for some $T^*<T$.
\end{theorem}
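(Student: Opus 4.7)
The plan is to use the standard singular perturbation strategy that underlies Tikhonov's theorem, splitting the time interval $[t_0,T^*]$ into an initial boundary layer of width $O(\ep)$ near $t_0$ and an outer region where the slow dynamics dominate. The boundary layer accounts for the fast relaxation of $\mathbf{v}^\ep$ onto the root $\Gamma$, while the outer region is controlled by a Lyapunov estimate based on positive stability.

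First, I would rescale to the fast time $\tau=(t-t_0)/\ep$. In this variable, \eqref{eqn shorthand system of equations second-order} becomes $d\mathbf{x}/d\tau=\ep\mathbf{v}$ and $d\mathbf{v}/d\tau=\mathcal{F}(\mathbf{x},\mathbf{v})$, so on any bounded $\tau$-interval $\mathbf{x}^\ep$ stays within $O(\ep)$ of $\mathbf{x}_0$. By continuous dependence on parameters, $\mathbf{v}^\ep$ as a function of $\tau$ is $O(\ep)$-close on such an interval to the solution $\mathbf{V}(\tau)$ of the adjoined system \eqref{eqn adjoined system} with $\mathbf{x}^\ast=\mathbf{x}_0$ and $\mathbf{V}(0)=\mathbf{v}_0$. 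Since $(\mathbf{x}_0,\mathbf{v}_0)$ lies in the domain of influence of $\Gamma$, we have $\mathbf{V}(\tau)\to\Gamma(\mathbf{x}_0)$ as $\tau\to\infty$. Thus, for any $\delta>0$, choose $T_\delta$ so that $|\mathbf{V}(T_\delta)-\Gamma(\mathbf{x}_0)|<\delta/2$, and then $\ep$ so small that $|\mathbf{v}^\ep(t_0+\ep T_\delta)-\Gamma(\mathbf{x}^\ep(t_0+\ep T_\delta))|<\delta$, placing the state inside a small neighborhood of the slow manifold at time $t_0+O(\ep)$.

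Second, in the outer region I would propagate this closeness through a Lyapunov argument. Positive stability of $\Gamma$ over the compact set $D$ produces, via a converse Lyapunov theorem, a function $V(\mathbf{x},\mathbf{v})$ satisfying $V(\mathbf{x},\Gamma(\mathbf{x}))=0$, $V\geq c|\mathbf{v}-\Gamma(\mathbf{x})|^2$ and $\nabla_{\mathbf{v}}V\cdot\mathcal{F}(\mathbf{x},\mathbf{v})\leq -cV$ in a neighborhood of the root, with constants uniform in $\mathbf{x}\in D$. Computing along solutions of \eqref{eqn shorthand system of equations second-order} gives
\[
\frac{d}{dt}V(\mathbf{x}^\ep,\mathbf{v}^\ep)=\nabla_{\mathbf{x}}V\cdot\mathbf{v}^\ep+\frac{1}{\ep}\nabla_{\mathbf{v}}V\cdot\mathcal{F}(\mathbf{x}^\ep,\mathbf{v}^\ep)\leq C-\frac{c}{\ep}V,
\]
and Gronwall's inequality then yields $V(\mathbf{x}^\ep(t),\mathbf{v}^\ep(t))=O(\ep)$ for $t\geq t_0+\ep T_\delta$, so $\mathbf{v}^\ep(t)\to\Gamma(\mathbf{x}^\ep(t))$ uniformly away from the initial layer, proving (i). For (ii), I would use
\[
\frac{d}{dt}(\mathbf{x}^\ep-\mathbf{x})=(\mathbf{v}^\ep-\Gamma(\mathbf{x}^\ep))+(\Gamma(\mathbf{x}^\ep)-\Gamma(\mathbf{x})),
\]
combine the $O(\sqrt{\ep})$ bound on the first term with the Lipschitz continuity of $\Gamma$ (which follows from the implicit function theorem, since positive stability forces $\det D_{\mathbf{v}}\mathcal{F}\neq 0$ at the root), and conclude by Gronwall that $\mathbf{x}^\ep\to\mathbf{x}$ uniformly on $[t_0,T^*]$. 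Choosing $T^*<T$ leaves a margin so that the $O(\ep)$-perturbation $\mathbf{x}^\ep$ stays inside $D$, keeping the Lyapunov estimates valid.

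The principal obstacle is making the Lyapunov construction uniform in $\mathbf{x}\in D$: positive stability is a pointwise statement about the frozen-$\mathbf{x}$ dynamics, and converting it into a single $V(\mathbf{x},\mathbf{v})$ with uniform decay rate and a uniform basin of attraction requires the converse Lyapunov theorem together with the compactness of $D$ and the regularity of $\mathcal{F}$ and $\Gamma$. This is where the substantive work in Tikhonov's original proof lies; once such a $V$ is available, the matching between boundary layer and outer region is a standard perturbation argument.
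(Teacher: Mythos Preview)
Your sketch is a reasonable outline of the classical singular-perturbation argument, but note that the paper does not prove this theorem at all: it is stated as a citation of Tikhonov~\cite{Tikhonov1952} and Vasil'eva~\cite{Vasileva1963}, Thm.~1.1, with no proof given. The authors simply invoke it as a known result and proceed directly to Remark~\ref{remark:boundary layer} and Theorem~\ref{thm:conv}. So there is nothing to compare against; your proposal is essentially a reconstruction of the standard Tikhonov argument (boundary-layer matching plus a uniform Lyapunov estimate), and the obstacle you identify---obtaining a Lyapunov function with decay rate and basin uniform over the compact $D$---is indeed where the work in the original references lies.
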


\begin{remark}\label{remark:boundary layer}
The degenerate system requires an initial condition $\mathbf{x}_0$ only for positions, while for the  $\ep$-system both $\mathbf{x}_0$ and $\mathbf{v}_0$ need to be provided. It is possible that $\mathbf{v}_0$ is \textit{incompatible} in the sense that $\mathbf{v}_0\neq\Gamma(\mathbf{x}_0)$. This is exactly why the convergence of $\mathbf{v}^\ep(t)$ to $\mathbf{v}(t)$ only holds for $t>t_0$. In case of incompatible initial conditions an initial boundary layer forms, which gets narrower as $\ep\to0$.
\end{remark}
Theorem \ref{thm:Tikhonov} can now be used to infer convergence of
solutions of \eqref{eqn:so} to solutions of~\eqref{eqn:fo-bz}.
% We
% first make the following remark.
% \begin{remark} \label{remark:conv}
% As noted above, using notation \eqref{eqn def cal F concatenation cal Fi}, we can write \eqref{eqn:fo-bz} and \eqref{eqn:so} in the form \eqref{eqn shorthand system of equations first-order} and \eqref{eqn shorthand system of equations second-order}, respectively.  Take a point $(\mathbf{x}^\ast,\mathbf{v}^\ast)\in\calR$, $\mathbf{v}^\ast = (v_1^\ast,\dots,v_N^\ast)$ that satisfies
% \[
% \mathcal{F}_i(\mathbf{x}^\ast,v_i^\ast)=0 \quad \text { and }\quad  \det D_v\mathcal{F}_i(\mathbf{x}^\ast,v_i^\ast)\neq0, \quad \text { for all } i=1,\dots,N.
% \]
% Then, as in the proof of Theorem \ref{theorem local continuity of trajectories}, we argue that there exists a closed set $D \subset \mathbb{R}^{Nd}$ with $\mathbf{x}^\ast \in D$ and an {\em isolated} root $\Gamma:D\to\R^{Nd}$, $\Gamma(\mathbf{x}):=(\gamma_1(\mathbf{x}),\ldots,\gamma_N(\mathbf{x}))$ of  \eqref{eqn F implicit}, with $\mathcal{F}$ given by \eqref{eqn def cal F concatenation cal Fi}.
% %\[
% %\mathcal{F}(\mathbf{x},\Gamma(\mathbf{x}))=0, \text{ for all } \mathbf{x}\in D.
% %\]
% %Under the assumption that this isolated root $\Gamma$ is {\em positively stable} in $D$, Theorem \ref{thm:Tikhonov} provides the desired convergence, as $\ep \to 0$ of solutions to \eqref{eqn:so} to solutions of \eqref{eqn:fo-bz}.
% \end{remark}
% Hence, a direct application of Theorem \ref{thm:Tikhonov} yields the
%convergence of solutions in the context of aggregation models \eqref{eqn:fo-bz}-\eqref{eqn:so}:
\begin{theorem}[Convergence of the relaxation model]
\label{thm:conv}
Assume that the isolated root $\Gamma$ is {\em positively stable} in
$D$, and take $(\mathbf{x}_0,\mathbf{v}_0)$ in the domain of influence
of this root. Denote by $\mathbf{x}(t)$ the (local) solution in $D$ of
the degenerate system \eqref{eqn:fo-bz} with initial configuration
$\mathbf{x}_0$ (the existence of this solution is provided by Theorem
\ref{theorem local continuity of trajectories}). Then, the solution
$(\mathbf{x}^\ep (t),\mathbf{v}^\ep (t))$ of the regularized system
\eqref{eqn:so}, initialized at $(\mathbf{x}_0,\mathbf{v}_0)$,
converges as $\ep \to 0$ to
$(\mathbf{x}(t),\mathbf{v}(t)):=(\mathbf{x}(t),\Gamma(\mathbf{x}(t)))$
in the sense i) and ii) given in Theorem \ref{thm:Tikhonov}.
\end{theorem}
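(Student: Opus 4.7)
The plan is to apply Tikhonov's Theorem \ref{thm:Tikhonov} essentially verbatim, since the groundwork has already been set up. First, I would observe that by the definition \eqref{eqn def cal F concatenation cal Fi}, the relaxation system \eqref{eqn:so} is precisely of the form \eqref{eqn shorthand system of equations second-order}, and that the anisotropic model \eqref{eqn:fo-bz} coincides with the associated degenerate system \eqref{eqn shorthand system of equations first-order} for the root $\Gamma=(\gamma_1,\ldots,\gamma_N)$ obtained from the implicit function theorem in the proof of Theorem \ref{theorem local continuity of trajectories}. The conclusions (i) and (ii) of the statement are exactly those provided by Theorem \ref{thm:Tikhonov}, so it suffices to check that each hypothesis of Theorem \ref{thm:Tikhonov} is met in the present setting.

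Second, I would run through those hypotheses. Isolated positive stability of $\Gamma$ in $D$ and the requirement that $(\mathbf{x}_0,\mathbf{v}_0)$ lie in its domain of influence are assumptions in the statement of Theorem \ref{thm:conv}. The remaining ingredient is a solution $\mathbf{x}(t)$ of the degenerate system \eqref{eqn:fo-bz}, starting at $\mathbf{x}_0$, that stays in $D$ on some time interval $[t_0,T]$. I would obtain this from Theorem \ref{theorem local continuity of trajectories}: positive stability (Definition \ref{def pos stab}) says that for each $\mathbf{x}^\ast\in D$, $\mathbf{v}^\ast=\Gamma(\mathbf{x}^\ast)$ is an asymptotically stable equilibrium of the adjoined system \eqref{eqn adjoined system}, which in particular forces the eigenvalues of $D_{\mathbf{v}}\mathcal{F}(\mathbf{x}^\ast,\mathbf{v}^\ast)$ to have negative real parts, hence $\det D_v\mathcal{F}_i(\mathbf{x}^\ast,v_i^\ast)\neq 0$ for each $i$. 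Theorem \ref{theorem local continuity of trajectories} then yields a unique continuous local trajectory of \eqref{eqn:fo-bz} through $\mathbf{x}_0$ in the direction $\Gamma(\mathbf{x}_0)$; shrinking $T$ if necessary, this trajectory remains in $D$ on $[t_0,T]$.

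Third, with all hypotheses of Theorem \ref{thm:Tikhonov} verified, applying it directly gives a $T^\ast<T$ such that the solution $(\mathbf{x}^\ep(t),\mathbf{v}^\ep(t))$ of \eqref{eqn:so} initialized at $(\mathbf{x}_0,\mathbf{v}_0)$ satisfies $\mathbf{x}^\ep(t)\to\mathbf{x}(t)$ uniformly on $[t_0,T^\ast]$ and $\mathbf{v}^\ep(t)\to\Gamma(\mathbf{x}(t))$ pointwise on $(t_0,T^\ast]$, which is exactly the claim.

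The only real matter to address (and the principal, though mild, obstacle) is the bookkeeping that links the two sets of hypotheses: showing that positive stability of $\Gamma$ in the sense of Definition \ref{def pos stab} entails the non-degenerate Jacobian condition needed by Theorem \ref{theorem local continuity of trajectories}, so that a continuous root $\Gamma$ and a local trajectory $\mathbf{x}(t)$ through $\mathbf{x}_0$ actually exist. Once this compatibility is made explicit, the statement reduces to a direct invocation of Tikhonov's theorem.
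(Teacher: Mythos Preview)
Your proposal is correct and follows essentially the same approach as the paper: the paper does not give a separate proof of Theorem~\ref{thm:conv} but treats it as an immediate consequence of Tikhonov's Theorem~\ref{thm:Tikhonov}, the identification of \eqref{eqn:so} and \eqref{eqn:fo-bz} with \eqref{eqn shorthand system of equations second-order} and \eqref{eqn shorthand system of equations first-order} having already been made in the text preceding the statement. Your additional observation that positive stability forces $\det D_v\mathcal{F}_i\neq 0$ (so that Theorem~\ref{theorem local continuity of trajectories} applies) is exactly the content of the paper's Remark~\ref{remark:breakdown}; one small quibble is that the convergence in Theorem~\ref{thm:Tikhonov} as stated in the paper is pointwise, not uniform, so you should phrase (ii) accordingly.
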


%We do not write this result as a separate theorem, since its statement would be virtually the same as in Theorem \ref{thm:Tikhonov}.

\begin{remark} \label{remark:breakdown} Cf. Remark
  \ref{remark:extend}, unless collisions or stopping occur, a $C^1$
  solution $\mathbf{x}(t)$ of \eqref{eqn:fo-bz} exists as long as
  $\det D_v\mathcal{F}_i(\mathbf{x}(t),v_i(t))\neq0$ for all
  $i=1,\dots,N$. Positive stability of
  $\mathbf{v}(t)=\Gamma(\mathbf{x}(t))$ is equivalent to eigenvalues
  of $D_{v}\mathcal{F}_i(\mathbf{x}(t),v_i(t))$ to be negative along
  the trajectory, for all $i=1,\ldots,N$. Moreover, once all these
  eigenvalues are negative at the initial time, they remain negative
  through the domain of existence of $\mathbf{x}(t)$, because none of
  these eigenvalues can change sign before $\det
  D_v\mathcal{F}_i(\mathbf{x}(t),v_i(t))$ touches $0$ for some
  $i$. Hence, we infer that the convergence in Theorem \ref{thm:conv}
  applies in all smooth regions of solutions $\mathbf{x}(t)$ of
  \eqref{eqn:fo-bz}, before a breakdown of the solution occurs.
\end{remark}

%\textcolor{red}{Discuss how Tikhonov's theorem applies basically trivially in the isotropic case? This results is not in the literature I think.}
\begin{remark}
Theorem \ref{thm:conv} (trivially) implies the convergence result for the isotropic case $g\equiv1$ as $\ep\to0$. To our knowledge, this has not been stated clearly by any previous work on this model.
\end{remark}

%%%%%%

\subsection{Positive stability of roots in two dimensions}
\label{sec:pos stab d=2}

Next, we elaborate the above convergence result with an example in
dimension $d=2$. In particular, we show that the stability of the
fixed point $\Gamma$ is essential, as otherwise the convergence fails.
The notion of asymptotic stability in Definition~\ref{def pos stab}
should be understood in the sense of Lyapunov. A stationary point
$\mathbf{v}^\ast=\Gamma(\mathbf{x}^\ast)$ of~\eqref{eqn adjoined
  system} is asymptotically stable, if and only if all eigenvalues of
$D_\mathbf{v}\mathcal{F}(\mathbf{x}^\ast,\mathbf{v}^\ast)$ have
strictly negative real part. Due to \eqref{eqn def cal F concatenation
  cal Fi}, the set of eigenvalues of $D_\mathbf{v}\mathcal{F}$ equals
to the union of the eigenvalues of all $D_{v}\mathcal{F}_i$,
$i=1,\ldots,N$.  Let~$\mathbf{v}^\ast = (v_1^\ast,\dots,v_N^\ast)$ be
a stationary point of \eqref{eqn adjoined system}, that is,
$\mathcal{F}_i(\mathbf{x}^\ast,v_i^\ast)=0$ for all $i=1,\dots,N$. We
write each velocity $v_i^\ast$ in the polar coordinates,
$v_i^\ast=r_i^\ast[\cos\theta_i^\ast,\sin\theta_i^\ast]^T$. To compute
the eigenvalues of $D_{v}\mathcal{F}_i(\mathbf{x}^\ast,v_i^\ast)$, we use
\eqref{eqn matrix product Jacobians coordinate transform} and
\eqref{eqn F polar Jacobian is matrix times matrix R' F'}, to get
\begin{equation}\label{eqn F' R' related to DvF via M Mr}
\underbrace{\left(
\begin{array}{cc}
\cos\theta_i^\ast & -\sin\theta_i^\ast \\
\sin\theta_i^\ast & \cos\theta_i^\ast \\
\end{array}
\right)}_{=:M}
%\cdot
\left(
\begin{array}{cc}
-1 & R'_i(\theta_i^\ast) \\
0 & H'_i(\theta_i^\ast) \\
\end{array}
\right)
=
D_v\mathcal{F}_i (\mathbf{x}^\ast,v_i^\ast)
%\cdot
\underbrace{\left(
\begin{array}{cc}
\cos\theta_i^\ast & -r_i^\ast \sin\theta_i^\ast\\
\sin\theta_i^\ast & r_i^\ast \cos\theta_i^\ast\\
\end{array}
\right)}_{=:M_r} .
\end{equation}
Note that the functions $H_i$ and $R_i$ used here (see \eqref{eqn:FR})
correspond to the fixed spatial configuration $\mathbf{x}^\ast$. The
matrices $M_r^{-1}D_v\mathcal{F}_i M_r$ and $D_v\mathcal{F}_i$ have
the same set of eigenvalues,
%since for any $\lambda$
%\begin{align}
%\det(\lambda I - M_r^{-1}D_v\mathcal{F}_i M_r) &= \det(M_r^{-1}(\lambda I - D_v\mathcal{F}_i)M_r)\\
%&= \underbrace{\det(M_r^{-1})\,\det(M_r)}_{=1}\,\det(\lambda I - D_v\mathcal{F}_i).
%\end{align}
and hence, we conclude from \eqref{eqn F' R' related to DvF via M Mr}
that $D_v\mathcal{F}_i(\mathbf{x}^\ast,v_i^\ast)$ has only eigenvalues
with negative real part, if and only if this is the case for
\begin{equation*}
M_r^{-1}\,M\,\left(
\begin{array}{cc}
-1 & R'_i(\theta_i^\ast) \\
0 & H'_i(\theta_i^\ast) \\
\end{array}
\right).
\end{equation*}
We have
\begin{equation*}
M_r^{-1}\,M =
\left(
\begin{array}{cc}
1 & 0 \\
0 & 1/r_i^\ast \\
\end{array}
\right),
\end{equation*}
and the eigenvalues of
\begin{equation*}
\left(
\begin{array}{cc}
1 & 0 \\
0 & 1/r_i^\ast \\
\end{array}
\right)
\left(
\begin{array}{cc}
-1 & R'_i(\theta_i^\ast) \\
0 & H'_i(\theta_i^\ast) \\
\end{array}
\right)
\end{equation*}
are
\begin{equation}
\label{eqn:evalues}
\lambda_1=-1 \quad  \text { and }  \quad \lambda_2=H'_i(\theta_i^\ast)/r_i^\ast.
\end{equation}
Note that (within $\calR$) all eigenvalues are real-valued.
Therefore, in view of \eqref{eqn:evalues}, $\mathbf{v}^\ast = \Gamma(\mathbf{x}^\ast)$ is asymptotically stable provided
\begin{equation}
\label{eqn:asy_stab}
H'_i(\theta_i^\ast)<0 \quad \text { for all } \quad i\in\{1,\ldots,N\}.
\end{equation}

\begin{remark}
An even more direct way of reaching \eqref{eqn:asy_stab} is to express the adjoint system \eqref{eqn adjoined system} in polar coordinates. Indeed, for each index $i$, \eqref{eqn adjoined system} yields
\begin{equation}
\label{eqn:adj-i}
\dfrac{d v_i}{d\tau}=\mathcal{F}_i(\mathbf{x}^\ast,v_i).
\end{equation}
Write $v_i$ in polar coordinates $v_i=r_i [\cos\theta_i,\sin\theta_i]^T$ and use \eqref{eqn F zero in fixed point} and notations \eqref{eqn:FR} (with functions $H_i$ and $R_i$ corresponding to the spatial configuration  $\mathbf{x}^\ast = (x_1^\ast,\dots,x_N^\ast)$), to derive from \eqref{eqn:adj-i}:
\begin{subequations}
\label{eqn:polars}
\begin{align}
    \frac{d\theta_i}{d \tau} &= \frac{1}{r_i} H_i(\theta_i), \label{eqn:polars-theta}\\
    \frac{dr_i}{d \tau} &= -r_i + R_i(\theta_i).
  \end{align}
\end{subequations}
The condition \eqref{eqn:asy_stab} for the asymptotic stability can
then be seen directly from \eqref{eqn:polars}. Indeed, the
linearization of \eqref{eqn:polars} around the stationary point
$(r_i^\ast,\theta_i^\ast)$ yields the Jacobian matrix
\[
\left(
\begin{array}{cc}
H_i'(\theta_i^\ast)/{r_i^\ast} & 0 \\
R'_i(\theta_i^\ast) &  -1 \\
\end{array}
\right),
\]
with the eigenvalues given by \eqref{eqn:evalues}.
\end{remark}

 \begin{remark}
 \label{remark:summary}
 We note that using the polar coordinates in two dimensions reduces the
 calculations to {\em scalar} expressions. For a better clarification of
 this point, let us summarize the findings so far. For convenience of
 notations, we drop the $\ast$ superscript.

 Consider a given spatial configuration $\mathbf{x} = (x_1,\dots,x_N)$. Then the following hold.
 \begin{itemize}
 \item To find the velocities $v_i$ corresponding to this configuration (solve $\mathcal{F}_i(\mathbf{x},v_i) =0$), it is more convenient to use polar coordinates $v_i=r_i[\cos \theta_i, \sin \theta_i]^T$. The problem reduces to finding the roots $\theta_i$ of $H_i(\theta)=0$. Then take $r_i= R_i(\theta_i)$, $i= 1,\ldots,N$ (for a $\theta_i$ to be admissible, it needs that $R_i(\theta_i)>0)$.

 \item The condition $\det D_v\mathcal{F}_i(\mathbf{x},v_i)\neq0$ for all $i=1,\dots,N$ guarantees that the fixed point $\mathbf{v}=\Gamma(\mathbf{x})$ is isolated and that  \eqref{eqn:fo-bz}  has a unique continuous solution through $\mathbf{x}$ in the direction $\mathbf{v}$ (see Remark \ref{remark:unique}). In polar coordinates this condition is equivalent to $H'_i(\theta_i)\neq0$, that is, $\theta_i$ is a {\em simple} root of $H_i$ for all $i=1,\dots,N$.

 \item The fact that the isolated root $\mathbf{v}=\Gamma(\mathbf{x})$ is positively stable, as required for the convergence of the $\ep$-regularization (see Theorem \ref{thm:conv}), is equivalent to $H'_i(\theta_i)<0$ for all $i=1,\ldots,N$.
 \end{itemize}
 %The condition on the sign of $F'$ is closely related to standard stability theory for dynamical systems. See e.g. Chapter 2 of \textbf{\color{red}[ref Strogatz]}, in which in dimension $1$ equations of the form $\dot{\xi}=G(\xi)$ are considered. The stability of their equilibria relates to the sign of $G'$.
 \end{remark}

%%%%%%%

\paragraph{A numerical example.}
\label{ex:instable}
%%%%%%%
We start by noting that in {\em all} numerical experiments presented in this paper we use the same choices of the potential $K$ and field-of-vision function $g$. For the potential $K$ we take the Morse potential \eqref{eqn:Morse} with $C_a=3, C_r=2, l_a=2, l_r=1$. The function $g$ for the field of vision is taken as in \eqref{eqn:gtanh} with parameters $a=5, b=\pi$. This choice corresponds to the solid line in Figure \ref{fig:vision}(b). Note that the width of the field of vision is approximately $180^\circ$ (frontal vision).

We present here a numerical example in two dimensions to illustrate the
convergence of the $\ep$ system in Theorem \ref{thm:conv}. We consider
a randomly-generated initial configuration of four particles --- see
Figure \ref{fig:instab}(a). For the top left particle, labeled as
particle~1, we plot the functions $H_1$ and $R_1$ defined by
\eqref{eqn:FR}, and note that there are three admissible initial directions (see \eqref{eqn:HRroots} and Figure
\ref{fig:instab}(b)):
$\theta_1\approx-1.78$, $\theta_1\approx-1.28$, $\theta_1\approx-1.00$, and
they are all simple roots since~$H_1'(\theta_1)\neq 0$.  Consequently,
there are three isolated fixed points $v_1$ that represent the
possible initial velocities for particle~1. The other three particles have unique velocities at this configuration--- see Figure \ref{fig:instab}(a) where  the admissible velocities are indicated by arrows.

\begin{figure}%
\centering
\includegraphics[width=0.43\textwidth]{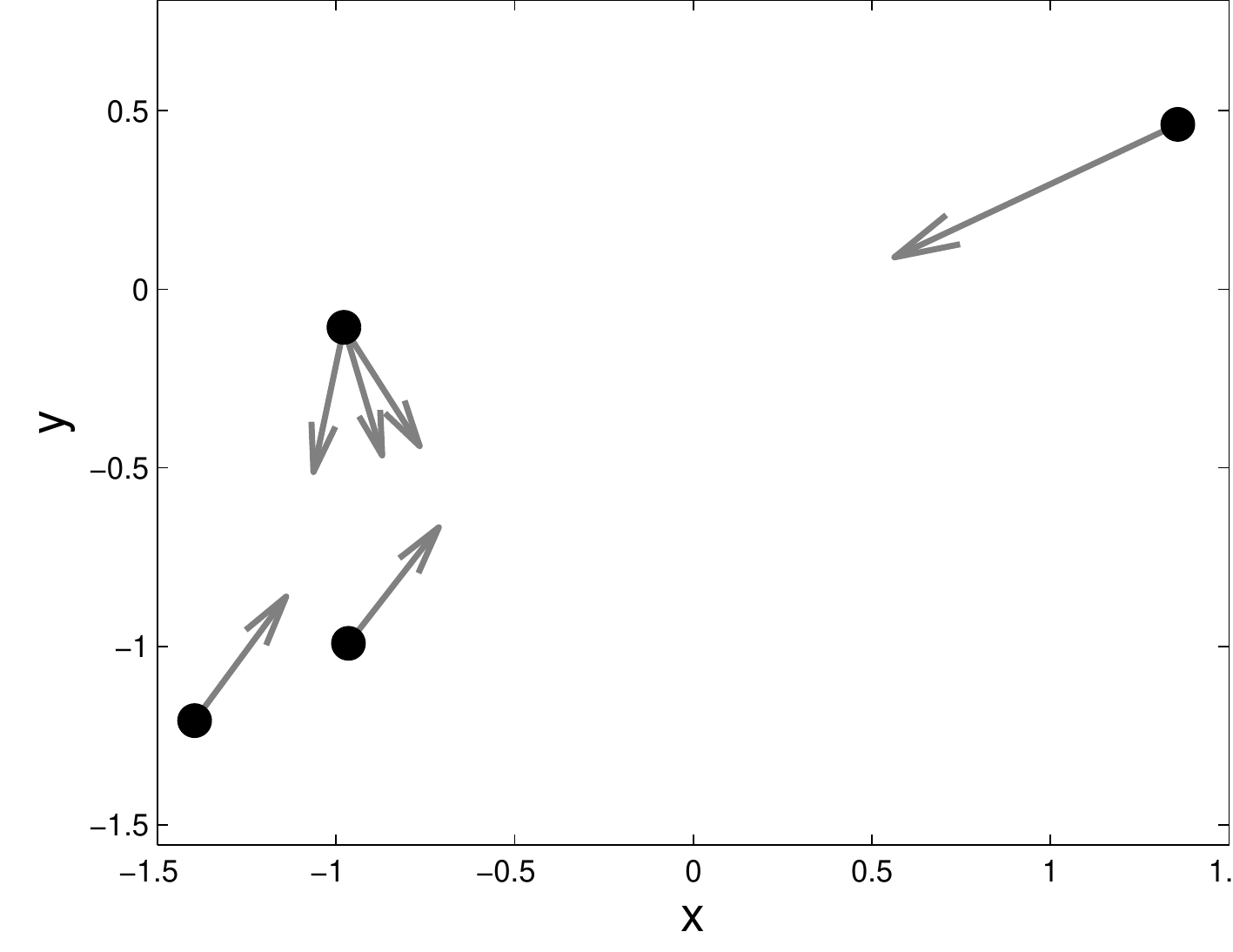}
$~~~$
\includegraphics[width=0.45\textwidth]{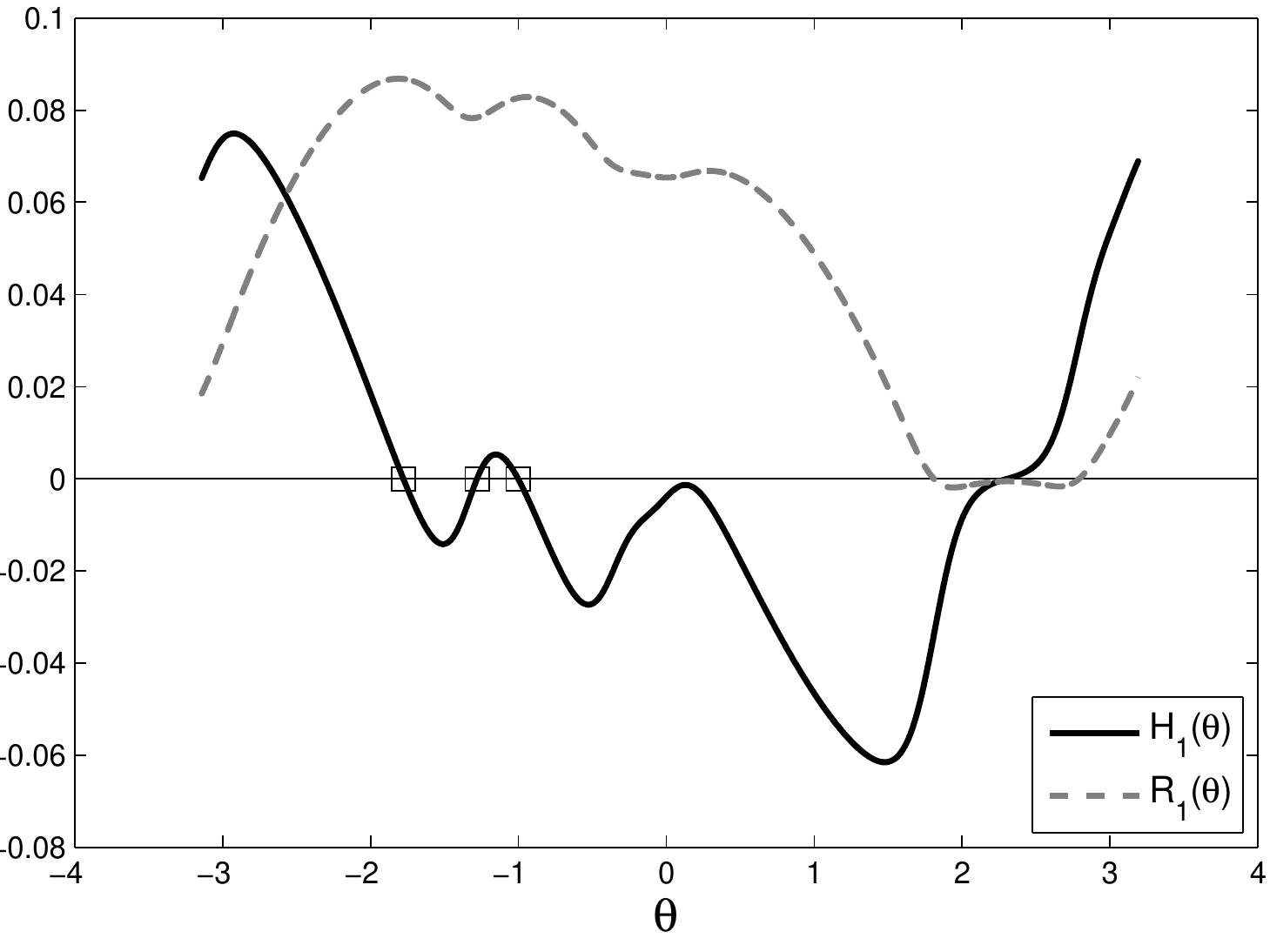}\\
(a)\hspace{0.45\textwidth} (b)\\
\includegraphics[width=0.45\textwidth]{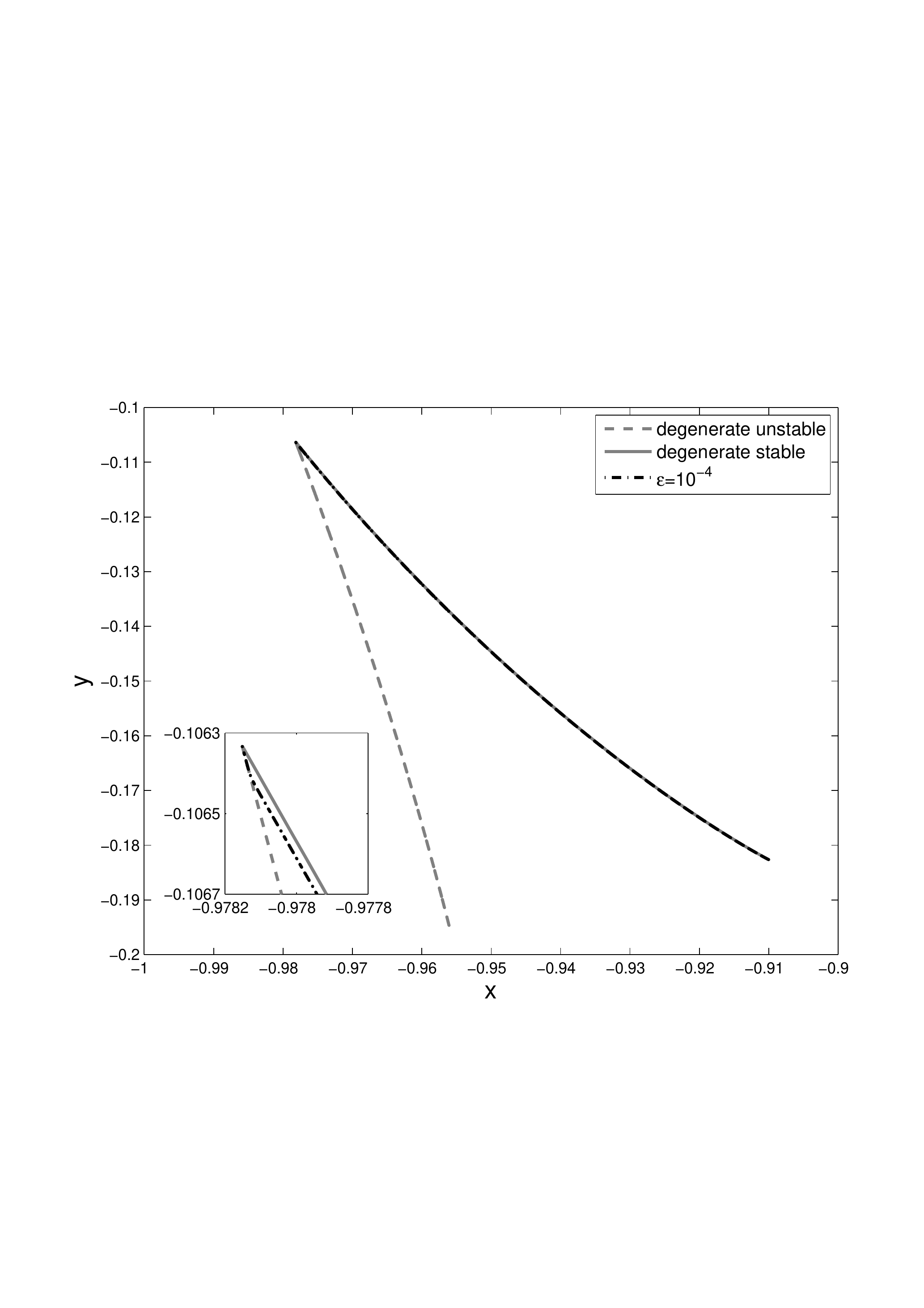}
$~~~$
\includegraphics[width=0.45\textwidth]{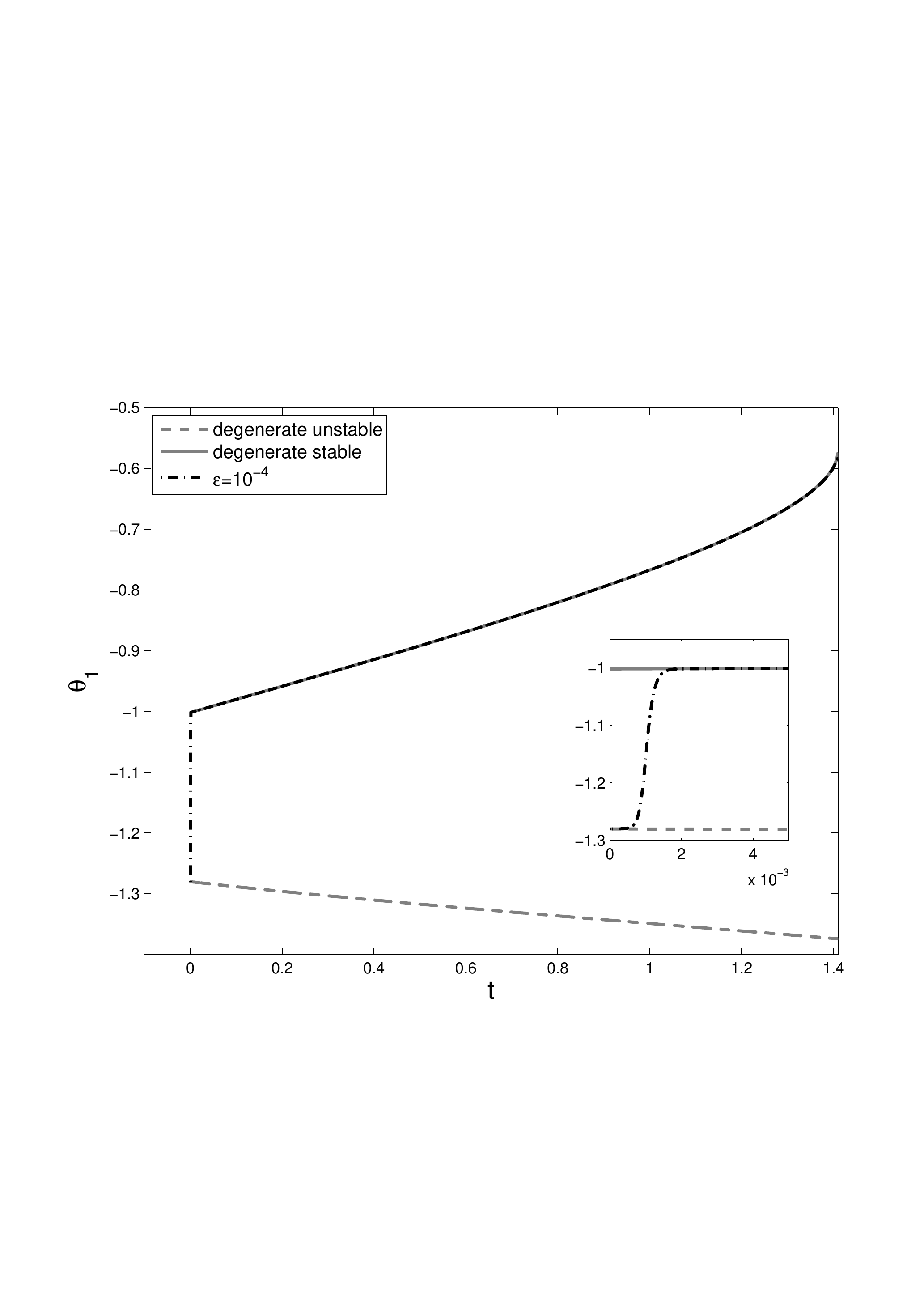}\\
(c)\hspace{0.45\textwidth} (d)
\caption{Illustration of the $\ep \to 0$ limit. (a) A randomly generated initial configuration of $4$ particles, with indication of their admissible velocities. Only one of them (the top left particle, labeled as particle $1$) allows for multiple initial velocities. (b) Plot of the functions $H_1$ and $R_1$ for particle $1$. There are three admissible values of $\theta_1$ for which $H_1(\theta_1)=0$ and $R_1(\theta_1)>0$, each indicated by a square. All the three roots are simple, resulting in the three {\em isolated} velocities shown on the left. The centre root is unstable, while the other two are stable. (c) Trajectories
$x_1(t)$ of the anisotropic (degenerate) system \eqref{eqn:fo-bz} starting in the directions of the centre root
$\theta_1 \approx -1.28$ and of the right-side root $\theta_1 \approx -1.00$, are shown in grey dashed and grey solid, respectively. The dash-dotted line represents the trajectory $x_1^\ep(t)$ of the $\ep$-system \eqref{eqn:so} with $\ep=10^{-4}$, initialized in the direction of the centre root.  Since this root is unstable, $x_1^\ep$ leaves this direction and approaches via a boundary layer (see the insert) the solution of the degenerate system that corresponds to a stable root (in this case, the right-side root). (d) Same numerical experiment as in (c), but showing the polar angle $\theta_1$ of the velocity $v_1$ as a function of time. The $\ep$-system starts at the unstable root $\theta_1 \approx -1.28$, but relaxes via an initial transition layer (see the insert) to the the solution of the degenerate system that starts at the stable root $\theta_1 \approx -1.00$.}%
\label{fig:instab}%
\end{figure}

Each such $v_1$ corresponds to a continuous trajectory of \eqref{eqn:fo-bz} starting from the initial configuration in Figure \ref{fig:instab}(a). In a numerical implementation one has to {\em pick} one of these admissible initial velocities and then evolve system \eqref{eqn:fo-bz} in time. We use the 4th order Runge-Kutta method for the numerical implementation. Figure \ref{fig:instab}(c) shows the trajectories of particle 1 that correspond to two of these admissible initial velocities: grey dashed (corresponding to root $\theta_1\approx-1.28$) and grey solid (corresponding to $\theta_1\approx-1.00$). Each trajectory in Figure \ref{fig:instab}(c) is the unique continuous solution given by Theorem \ref{theorem local continuity of trajectories} plotted on its {\em maximal} interval of existence --- the possible modes of breakdown are discussed in detail in Section \ref{sect:breakdown}.

We turn now to the convergence of the $\ep$ regularization \eqref{eqn:so} and the role of the positive stability assumption in Theorem \ref{thm:conv}.  Note that at the centre root $\theta_1 \approx-1.28$, $H_1$  has positive slope, while $H_1'<0$ at the other two roots. It means that only the roots at $\theta_1\approx-1.78$ and $\theta_1\approx-1.00$ are positively stable, the centre one is not. The regularized system \eqref{eqn:so} is {\em not} expected to converge to the trajectory corresponding to the centre root  and Figure \ref{fig:instab}(c) illustrates this fact. More specifically, the dash-dotted line shows the trajectory $x_1^\ep(t)$  of particle 1, obtained by integrating numerically \eqref{eqn:so} starting from the configuration in Figure \ref{fig:instab}(a) and an initial velocity that corresponds to the root at $\theta_1\approx-1.28$. Here, $\ep = 10^{-4}$. Note that, due to the instability of this root,  the trajectory of the $\ep$-model does not follow the dashed trajectory of model \eqref{eqn:fo-bz}. Instead, it approaches via a thin initial boundary layer, the solid trajectory of \eqref{eqn:fo-bz} that corresponds to the stable root $\theta_1\approx-1.00$.

We do not address here the question of why the root at $\theta_1\approx-1.00$ was ``chosen", and not the one at $\theta_1\approx-1.78$, since identifying domains of influence of stable roots is a challenge in itself.  We just note that the initial velocity we provided for the $\ep$-system happened to be in the domain of influence of $\theta_1\approx-1.00$. Finally, for an enhanced visualization, the stability/instability of the roots is also illustrated in Figure \ref{fig:instab}(d), which shows the time evolution of the polar angle $\theta_1(t)$ of $v_1(t)$. The dashed grey and solid grey lines represent the evolution $\theta_1(t)$ corresponding to the like-marked trajectories in Figure \ref{fig:instab}(c) (continuous solutions of \eqref{eqn:fo-bz} that correspond to initial $\theta_1\approx-1.28$ and $\theta_1\approx-1.00$, respectively). The black dash-dotted line represents the evolution $\theta_1^\ep(t)$ obtained from \eqref{eqn:so}. Initialized at the unstable root, $\theta_1^\ep$ undergoes through a boundary layer before approaching the solid grey line corresponding to the stable initial root $\theta_1\approx-1.00$.

%%%%%%%%%%%%%

\section{Breakdown and jump selection}
\label{sect:breakdown}

Smooth solutions to \eqref{eqn:fo-nobz} may cease to exist due to various factors. In this section, we investigate these modes of breakdown and explain how jumps can be meaningfully enforced. We provide numerical illustrations of these ideas in two dimensions.

\subsection{Modes of breakdown: classification}
\label{sect:modes}
A possible breakdown of $C^1$ solutions to \eqref{eqn:fo-bz} was already indicated in previous sections (see Remark \ref{remark:extend}). Namely, it may occur when one of the Jacobian matrices $D_v \mathcal{F}_i$ becomes singular for some particle $i$. At such time, the phase-space trajectory $(x_i(t),v_i(t))$ may cease to be continuous, provided that, for any continuous extension of $\{x_1,\ldots,x_N\}$ in the direction of $\{v_1,\ldots,v_N\}$, there is no zero of \eqref{eqn F zero in fixed point} in a (sufficiently small) neighbourhood of $v_i$. In other words, the {\em current} velocity $v_i$ may cease to be a zero of  \eqref{eqn F zero in fixed point} beyond this time and a jump in $v_i$ has to be enforced. We call such a discontinuity in velocities, due to root losses of $\mathcal{F}_i$, a jump of Type I.

Other modes of breakdown are also possible: collision of particles and stopping. We do not address the former. Collisions are a delicate matter, which has not been properly addressed even in the context of isotropic models. Very briefly, the repulsion component in the interaction potential $K$ has to be strong {\em enough} to counteract the attraction. Particle collisions have been discussed in \cite{BertozziCarilloLaurent} for instance, but the potentials there are purely attractive. For our purpose, we sidestep the issue, and focus instead on particle stopping, that is, when one $v_i=0$. In fact, this mode of breakdown is not present in the isotropic model \eqref{eqn:fo-nobz}, being entirely characteristic to the anisotropic  model \eqref{eqn:fo-bz}.

Note that, as given by \eqref{eqn:fo-bz}, the anisotropic model is not even defined when one particle is at rest ($v_i=0$). This is because the definition of the field of vision assumes the existence of a current direction of motion (an individual facing a certain direction). However, $v_i=0$ can be considered as a solution of \eqref{eqn:vi-bz} in the generalized interpretation of Definition \ref{def generalization fixed point}. And indeed, in numerics, we observe that $v_i=0$ does occur, in a sense that is consistent with this definition. More precisely, we observe numerically that a generic particle $i$ brakes and then stops, in a {\em continuous} fashion, along its direction of motion. One-sided continuity of $v_i/|v_i|$ at the stopping time (called here $t^\ast$) is essential, as this enables us to pass the limit $t \nearrow t^\ast$ in \eqref{eqn:vi-bz} and find that $v_i=0$ is a solution of \eqref{generalized fixed point}, with $s= \lim_{t \nearrow t^\ast} v_i(t)/|v_i(t)|$.

To illustrate the stopping idea in two dimensions, take the polar coordinate representation $v_i(t)=r_i(t) [ \cos \theta_i(t), \sin \theta_i(t)]^T$. Then, by braking continuously and stopping at time $t^\ast$, we mean that:
\[
\lim_{t \nearrow t^\ast} r_i(t) = 0, \qquad \lim_{t \nearrow t^\ast} \theta_i(t) = \theta_i^\ast,
\]
for some angle $\theta_i^\ast$. Hence,  since $v_i(t)/|v_i(t)|= [ \cos \theta_i(t), \sin \theta_i(t)]^T$, equation \eqref{eqn:vi-bz} has a well defined limit $t \nearrow t^\ast$. By passing to the limit we find
\[
0 =  -\dfrac1N\displaystyle\sum_{j\neq i}\,\nabla_{x_i}K(|x_i^\ast-x_j^\ast|)\, g\left(\dfrac{(x_i^\ast-x_j^\ast)}{|x_i^\ast-x_j^\ast|}\cdot  {\cos \theta_i^\ast \brack \sin \theta_i^\ast} \right),
\]
where $\{x_j^\ast\}$ represent the spatial configuration at $t^\ast$. Hence, $v_i=0$ solves \eqref{generalized fixed point} at $t=t^\ast$ with $s=[ \cos \theta_i^\ast, \sin \theta_i^\ast]^T$.

In all numerical experiments we performed, we noticed that particles stop continuously, in the above sense. However, the typical scenario is that there is no continuous phase-space trajectory $(x_i(t),v_i(t))$ for particle $i$ beyond its stopping at time $t^\ast$. Similar to the root loss jump (Type I), $v_i=0$ is a (generalized) solution of \eqref{eqn F zero in fixed point} at $t=t^\ast$, but to evolve the system further in time a jump in $v_i$ has to be enforced. We call the jumps due to particle stopping, jumps of Type II.

% Let us gather the two modes of breakdown of solutions to \eqref{eqn:fo-bz} identified above. We denote generically by $t^\ast$ the breakdown time, regardless of the type of breakdown that occurs.
% \begin{itemize}
% \item {\bf Type I:}  For some particle $i$, $\det D_v \mathcal{F}_i=0$ at $t=t^*$. The root $v_i$ of \eqref{eqn F zero in fixed point} no longer exists beyond $t=t^\ast$. A jump in the velocity $v_i$ of particle $i$ has to be enforced, in order to evolve \eqref{eqn:fo-bz} further in time.

% \item {\bf Type II:} For some particle $i$, $\lim_{t\nearrow t^*}v_i(t)=0$. The zero velocity $v_i=0$ is a (generalized) root of \eqref{eqn F zero in fixed point} at $t=t^\ast$, but it cannot be evolved continuously with \eqref{eqn:fo-bz} beyond $t=t^\ast$. A jump in $v_i$ has to be enforced.
% \end{itemize}

We emphasize that throughout this section, by jump discontinuities for \eqref{eqn:fo-bz} we mean jumps in velocities $v_i$, and not in the actual trajectories $x_i$. The latter remain continuous through jumps.

%%%%%%%%%%

\subsection{Numerical illustrations  in two dimensions}
We illustrate the two modes of breakdown in two dimensions. A breakdown of type I occurs when $H_i'(\theta_i)=0$ for some $i$ at $t=t^\ast$ (see Remark \ref{remark:summary}). Equivalently, $\theta_i$ is no longer a simple root of $H_i$. For a numerical illustration, we reconsider the run of \eqref{eqn:fo-bz}  indicated by solid grey lines in Figures \ref{fig:instab}(c) and (d), that is, the solution that starts in the direction of the stable initial root $\theta_1 \approx -1.00$. At $t^\ast = 1.41$, the current direction $\theta_1 \approx -0.57$ becomes a {\em double} root of $H_1$, as illustrated in  Figure \ref{fig:HRjump1411}. The empty circle represents the root $\theta_1$ just before the jump at $t^\ast$ occurs.
Moreover, there exists no {\em continuous} extension of the phase-space trajectory beyond $t=t^\ast$ (since the double root would disappear and would no longer be a root immediately after $t^\ast$!). The insert in Figure \ref{fig:HRjump1411} illustrates this transition. The solid black line shows the graph of $H_1(\theta)$ before the jump, where the root $\theta_1 \approx -0.57$ is still present. By extending the dynamics in the direction of the current velocity, this root disappears (the dash-dotted line in the insert).

\begin{figure}[tb]%
\centering
\includegraphics[width=0.6\textwidth]{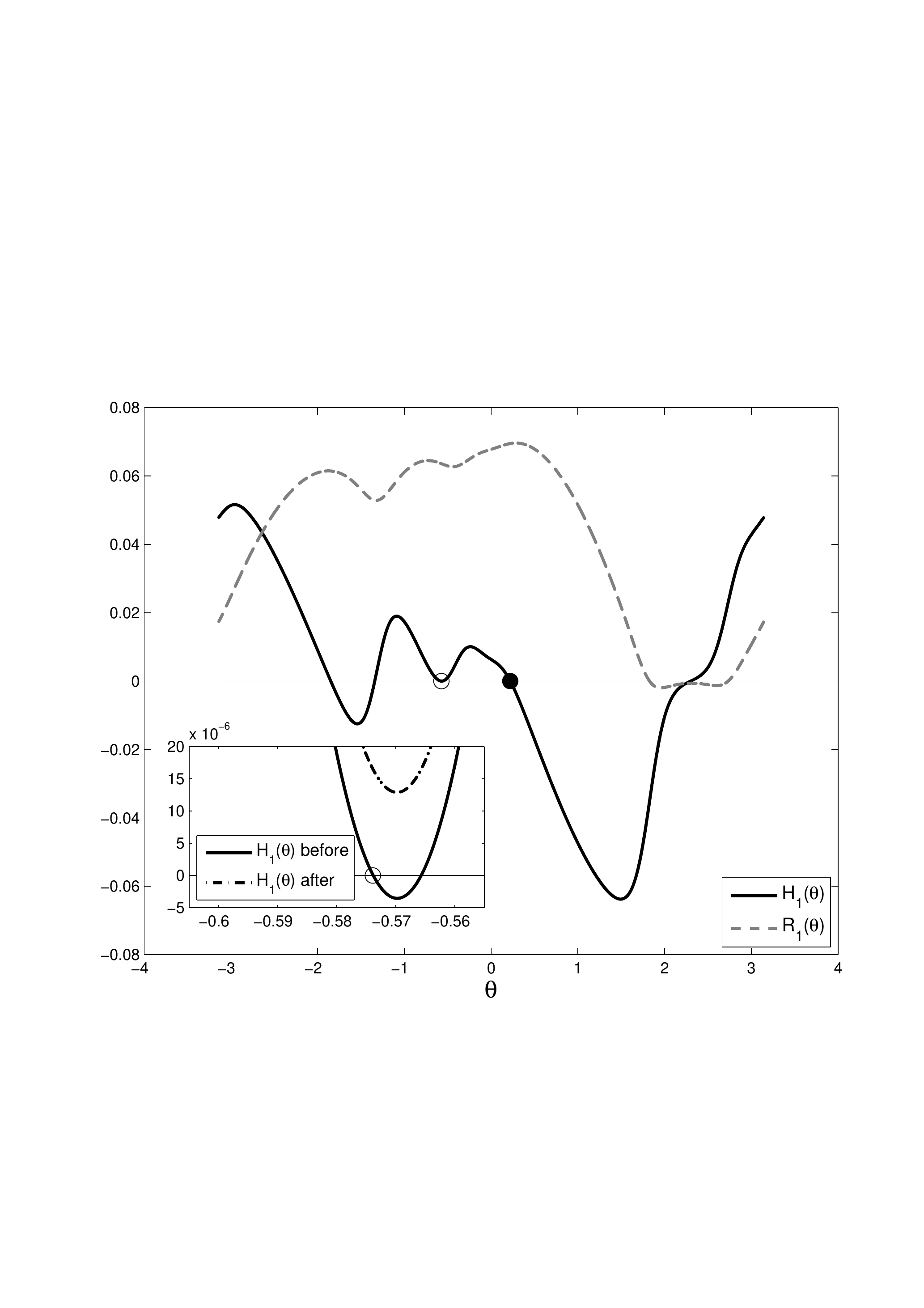}\\
\caption{Functions $H_1$ and $R_1$ (see \eqref{eqn:FR}) at breakdown $t^\ast=1.41$ when root loss (jump of Type I) occurs for particle $i=1$. The dynamics of \eqref{eqn:fo-bz} cannot continue in the direction of the current root $\theta_1 \approx -0.57$ (indicated by an empty circle), as this value would no longer be a root immediately after $t^\ast$. The zoomed-in insert illustrates this scenario: before jump (solid line), when the root indicated by an empty circle is still present, and an extension (dash-dotted line) in the direction of the current velocity, which leads to root loss. A jump in velocity has to be enforced in order to extend the dynamics of \eqref{eqn:fo-bz} beyond breakdown. The new value is indicated by a filled circle --- see Section \ref{sect:jump-sel}.}%
\label{fig:HRjump1411}%
\end{figure}

Assume for now that we have a criteria for setting a velocity jump at $t^\ast = 1.41$, that we reinitialize \eqref{eqn:fo-bz} at the current spatial configuration $\mathbf{x}(t^\ast)$, but in the direction of the new velocity, and that we can continue the time evolution of  \eqref{eqn:fo-bz} until a new breakdown occurs. Anticipating the results, suppose that $\theta_1$ takes after the jump the new value indicated by the filled circle in Figure \ref{fig:HRjump1411} ($\theta_1 \approx 0.22$) and that the evolution of \eqref{eqn:fo-bz} continues in this new direction. The motivating Figure \ref{fig:trajectories full and zoom}(a) from the Introduction corresponds in fact to the same run of \eqref{eqn:fo-bz} as that considered here, and shows this extended trajectory. More precisely, inspect the trajectory $x_1(t)$ of the top left particle (particle 1) indicated by a solid line in Figure \ref{fig:trajectories full and zoom}(a).  The first segment of this trajectory (up to the first breakdown time $t^\ast$ indicated by a square) is the same as the solid grey line in Figure \ref{fig:instab}(c). At $t^\ast=1.41$ the trajectory $x_1(t)$ makes a sharp turn ($\theta_1$ jumps from the empty-circle to the filled-circle value) and then continues until a second breakdown is encountered. This next breakdown,  indicated by the second square along the trajectory of particle 1, is also a breakdown of type I, and can be discussed using similar considerations as for the first jump.  We do not treat this breakdown in detail, but enforce a jump (as discussed in Section \ref{sect:jump-sel}), and continue the evolution.

\begin{figure}[tb]%
\centering
\includegraphics[width=0.6\textwidth]{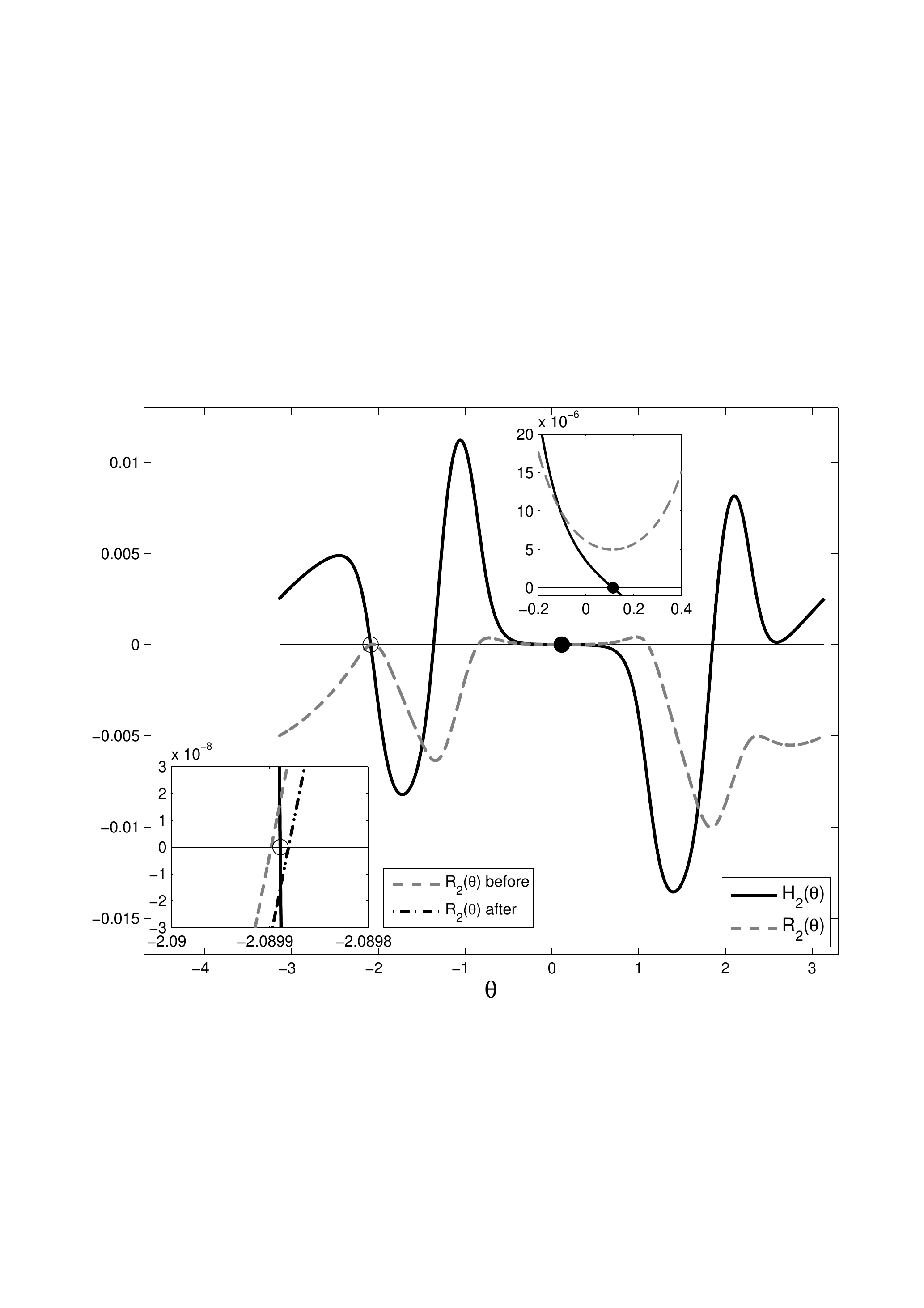}\\
\caption{Functions $H_2$ and $R_2$ (see \eqref{eqn:FR}) at breakdown $t^\ast=34.34$ when particle $i=2$ stops (jump of Type II). Particle $2$ has stopped in the direction $\theta_2^\ast \approx -2.09$ (empty circle), which is simultaneously a root of $H_2$ and $R_2$ (see \eqref{eqn:HRroots}). The bottom left insert shows zoomed-in plots of $H_2$ (solid black) and $R_2$ (dashed grey) near $\theta_2^\ast$, shortly before the breakdown at $t=t^*$. Had the numerical integration continued in the current direction, $R_2(\theta)$ would become negative (dash-dotted black), and the root would no longer be admissible. A jump in velocity has to be enforced in order to extend the dynamics of \eqref{eqn:fo-bz} beyond the stopping breakdown. The post-jump direction is indicated by a filled circle --- see Section \ref{sect:jump-sel}.  The top right insert shows that indeed, this new root is admissible, as $R_2$ is positive (but small) there.}%
\label{fig:HRjump34337}%
\end{figure}

We focus instead on the breakdown indicated by the square on the trajectory of the particle that starts from top right in Figure \ref{fig:trajectories full and zoom}(a); we label this particle as particle $2$.  This  breakdown is of type II. Particle $2$ brakes continuously, as described in Section \ref{sect:modes}, and stops.  Figure \ref{fig:HRjump34337} shows the plots of $H_2$ and $R_2$ at this stopping time $t^\ast = 34.34$. The particle stops in the direction $\theta_2^\ast \approx -2.09$ indicated by the empty circle, which is simultaneously a root of $H_2$ and $R_2$. This is equivalent to the fixed point equation \eqref{eqn fixed point polar coordinates} (for $i=2$) to have the solution $r_2=0$,  $\theta_2=\theta_2^\ast$ (see also \eqref{eqn:HRroots}). In the bottom left insert of Figure \ref{fig:HRjump34337} we show the functions $H_2$ (solid black) and $R_2$ (dashed grey) shortly before breakdown. Since $R_2$ is positive (but very small, note the scale of the vertical axis in the insert) at the root of $H_2$ indicated by the empty circle, the corresponding velocity is admissible. However, by evolving \textit{numerically} \eqref{eqn:fo-bz} in the direction of the current root $\theta_2^\ast \approx -2.09$, $R_2(\theta)$ becomes negative (dash-dotted black line) and the root is no longer admissible. We conclude that beyond stopping time, phase-space trajectories cannot be extended continuously, and a jump in $v_2$ has to occur. We remark that the two graphs of $H_2$ (before breakdown and after extension) nearly coincide and the difference is not visible in the plot. The filled circle in Figure \ref{fig:HRjump34337} indicates the value of $\theta_2$ after the jump (see Section \ref{sect:jump-sel}). We include the top right insert in Figure \ref{fig:HRjump34337} to clarify that $R_2$ is indeed positive at the new $\theta_2$.

%%%%%%%%%%

\subsection{Jump selection through the relaxation model}
\label{sect:jump-sel}

A central issue in this article is how to continue the solutions of \eqref{eqn:fo-bz} beyond a breakdown time, by enforcing a jump in velocity. Note that, having reached a breakdown time, there could be multiple options for a jump in velocity. For instance, at the breakdown time $t^*=1.41$ in Figure \ref{fig:HRjump1411}, there are three simple roots of $H_1$ which are admissible (that is, $R_1>0$ at these roots). Enforcing a jump in $\theta_1$ to any of these {\em isolated} roots would enable us to continue the dynamics of \eqref{eqn:fo-bz} beyond the breakdown.

The question is how to select which jump to perform. This is done using the relaxation system \eqref{eqn:so}. Based on the   interpretation of this model as including small but positive inertia or response time, we expect that {\em physically relevant} solutions of the anisotropic model \eqref{eqn:fo-bz} should be attained as limits $\ep \to 0$ of solutions to   \eqref{eqn:so}. It would thus be  meaningful to choose the jump that the $\ep$-system selects in the $\ep \to 0$ limit.

We perform runs of the relaxation model \eqref{eqn:so} using three values of $\ep$: $\ep=10^{-2},10^{-3}$, and $10^{-4}$. We initialize \eqref{eqn:so} with a phase-space configuration that corresponds to the numerical run presented in Section  \ref{sec:pos stab d=2} and used in the considerations above:  initial spatial configuration as in Figure \ref{fig:instab}(a), and initial velocity as the fixed point of \eqref{eqn:vi-bz} that corresponds to the stable root $\theta_1\approx -1.00$ in Figure \ref{fig:instab}(b). As discussed and illustrated in Section \ref{sect:conv}, starting from a stable root, we have convergence of the $\ep$-model to solutions of \eqref{eqn:fo-bz}, before a breakdown of \eqref{eqn:fo-bz} occurs. Figure \ref{fig:trajectories full and zoom}(a) shows the trajectories of \eqref{eqn:so}, though on the scale of the figure they are indistinguishable from the solution of \eqref{eqn:fo-bz}.

Upon approaching the first breakdown time of \eqref{eqn:fo-bz}, $t^\ast=1.41$, solutions of \eqref{eqn:so} steepen and approach, via a fast dynamics, a different isolated stable root of \eqref{eqn:fo-bz}. The zoomed plots in Figure \ref{fig:trajectories full and zoom}(b), as well as those in Figure \ref{fig:jump1}, show this evolution of the $\ep$-system near $t^\ast= 1.41$. Figure \ref{fig:trajectories full and zoom}(b) shows the trajectory $x_1^\ep(t)$, while Figures \ref{fig:jump1}(a) and (b) plot $\theta_1^\ep(t)$ and $|v_1^\ep(t)|$, respectively. In each such figure, the fast transition of solutions within an $O(\ep)$ time interval can be observed.  Returning to Figure  \ref{fig:HRjump1411}, and inspecting Figure \ref{fig:jump1}(a), we notice that indeed, the stable root $\theta_1 \approx 0.22$ (filled circle) of the degenerate system is being selected by the $\ep$-model. Note again that this is not the only admissible stable root (with $H_1'<0$, $R_1>0$) available at the jump (see Figure  \ref{fig:HRjump1411}). But in light of the convergence result in Section \ref{subsect:conv}, the selection of $\theta_1$ at the filled circle was in fact expected, and the reason is discussed in the following paragraph.

Consider the adjoined system associated to the $\ep$-model --- see \eqref{eqn:adj-i} and \eqref{eqn:polars} for $i=1$. At a fixed spatial configuration $\mathbf{x}^\ast$, evolving the {\em fictitious} time $\tau \to \infty$ yields indication on the asymptotic stability of a root. Hence, consider hypothetically the adjoined system \eqref{eqn:polars} with $i=1$ for a spatial configuration $\mathbf{x}^\ast_+$ consisting of an infinitesimal extension from $t=t^\ast$ to $t=t^\ast_+$ of the spatial configuration $\mathbf{x}^\ast$ at the jump of the degenerate system \eqref{eqn:fo-bz}, extension taken in the current direction of motion of \eqref{eqn:fo-bz}. The plots of $H_1$ and $R_1$ corresponding to such an extension to $t^\ast_+$ would be infinitesimal perturbations of the plots in Figure \ref{fig:HRjump1411}, where most importantly, the double root indicated by an empty circle is no longer a root of $H_1$ at $t^\ast_+$ (this ``root loss" is the reason for the breakdown, cf.~the insert in Figure \ref{fig:HRjump1411}). Evolving the adjoined system \eqref{eqn:polars} with $i=1$ at the frozen, hypothetical, {\em post-jump} configuration $\mathbf{x}^\ast_+$ is expected to provide the new asymptotically stable root that the $\ep$-system would converge to. The evolution of $\theta_1$ is simply driven by the sign of the right-hand-side in \eqref{eqn:polars-theta} ($i=1$), and since $r_1>0$, this sign is given by $H_1$ at $t^\ast_+$. It is now clear from Figure \ref{fig:HRjump1411} that initializing \eqref{eqn:polars-theta} ($i=1$) with $\theta_1$ near the empty circle, which is the value it had before jump, would result in selecting the stable fixed point indicated by the filled circle. This observation serves as the starting point in designing an efficient numerical method to simulate model \eqref{eqn:fo-bz} (Section \ref{sect:algorithm}).

Model \eqref{eqn:fo-bz} encounters a breakdown of type II at $t^\ast=34.34$, when particle $2$ stops in the current direction $\theta^\ast \approx -2.09$ (the root of $H_2$ indicated by empty circle in Figure  \ref{fig:HRjump34337}). On the contrary, solutions of the $\ep$-model \eqref{eqn:so} continue through $t^\ast$ and capture again a certain jump in direction. Figure \ref{fig:trajectories full and zoom}(c) plots the trajectory $x_2^\ep(t)$ near the second jump $t^\ast=34.34$, while Figures \ref{fig:jump3}(a) and (b) show $\theta_2^\ep(t)$ and $|v_2^\ep(t)|$, respectively. Note indeed that Figure \ref{fig:jump3}(b) captures the braking of particle $2$ that occurs in the degenerate system ($|v_2|$ reaches order $O(10^{-8})$). The difference though is that solutions of the $\ep$-system do not actually stop, as particle $2$ changes direction (see Figure \ref{fig:jump3}(a) where $\theta_2$ evolves fast from $\approx -2.09$ to $\approx 0.11$), picks up a higher velocity (of order $O(10^{-5})$), and continues the motion. This fast transition results in a very sharp turn in the trajectory,  as illustrated in the zoomed plot Figure \ref{fig:trajectories full and zoom}(c) (see also the insert in the figure).

By inspecting Figure \ref{fig:HRjump34337} one observes that the $\ep$-system has selected the jump to root $\theta_2 \approx 0.11$ indicated by the filled circle. In this case this was in fact the only admissible root of $H_2$ at $t^\ast$, as the others have $R_2<0$. However, were there more admissible roots, it is not as clear as it was for the type I jump in Figure \ref{fig:HRjump1411}, whether similar considerations regarding the adjoint system \eqref{eqn:polars} can be used to predict the selection of the post-jump velocity. First, there is no natural extension (from $t^\ast$ to $t^\ast_+$) of a configuration $\mathbf{x}^\ast$ at a breakdown that involves a {\em resting} particle. In a numerical simulation however,  this point is less relevant, as the numerical value of a particle that attempts to stop gets very small, but it doesn't actually reach zero. Hence, extending the numerics by a small amount into a post-jump configuration is possible (this was done for instance to produce the insert in Figure \ref{fig:HRjump34337}). Second, from a theoretical point of view, the evolution $\tau \to \infty$ in \eqref{eqn:polars-theta} with $i=2$, at an infinitesimally extended spatial configuration $\mathbf{x}^\ast_+$, cannot be argued as for jump I, by invoking the sign of the right-hand-side (in this case, the sign of $H_2$ at $t^\ast_+$). The full two-dimensional evolution of the adjoint system \eqref{eqn:adj-i}  would have to be employed instead, and issues such as the domain of influence and getting attracted into a certain fixed point, are more subtle. We conclude by noting that in practice, for numerical simulations, the frozen/adjoint-system idea seems to work fine for jumps of type II as well, it is just its theoretical foundation that is less solid than for jumps I. Alternatively, one could use the {\em real} time evolution of the $\ep$-system near the breakdown in order to select a jump (as discussed below in Section \ref{sect:algorithm}).

The numerical observations reported in this section have been confirmed with various other simulations, involving different initial conditions and larger number of particles. The two types of jumps discussed here and the shock-capturing of the $\ep$-system are typical findings.

%The reason is that the breakdown due to stopping  is very different conceptually from the one due to a root loss.  Hence, the jump selection of the $\ep$-system cannot be understood from the evolution $\tau \to \infty$ of the adjoint system at a {\em frozen}, post-jump configuration. This means that there is no shortcut for enforcing a type II jump in the anisotropic model  \eqref{eqn:fo-bz}.

\begin{figure}%
\centering
\includegraphics[totalheight=0.24\textheight]{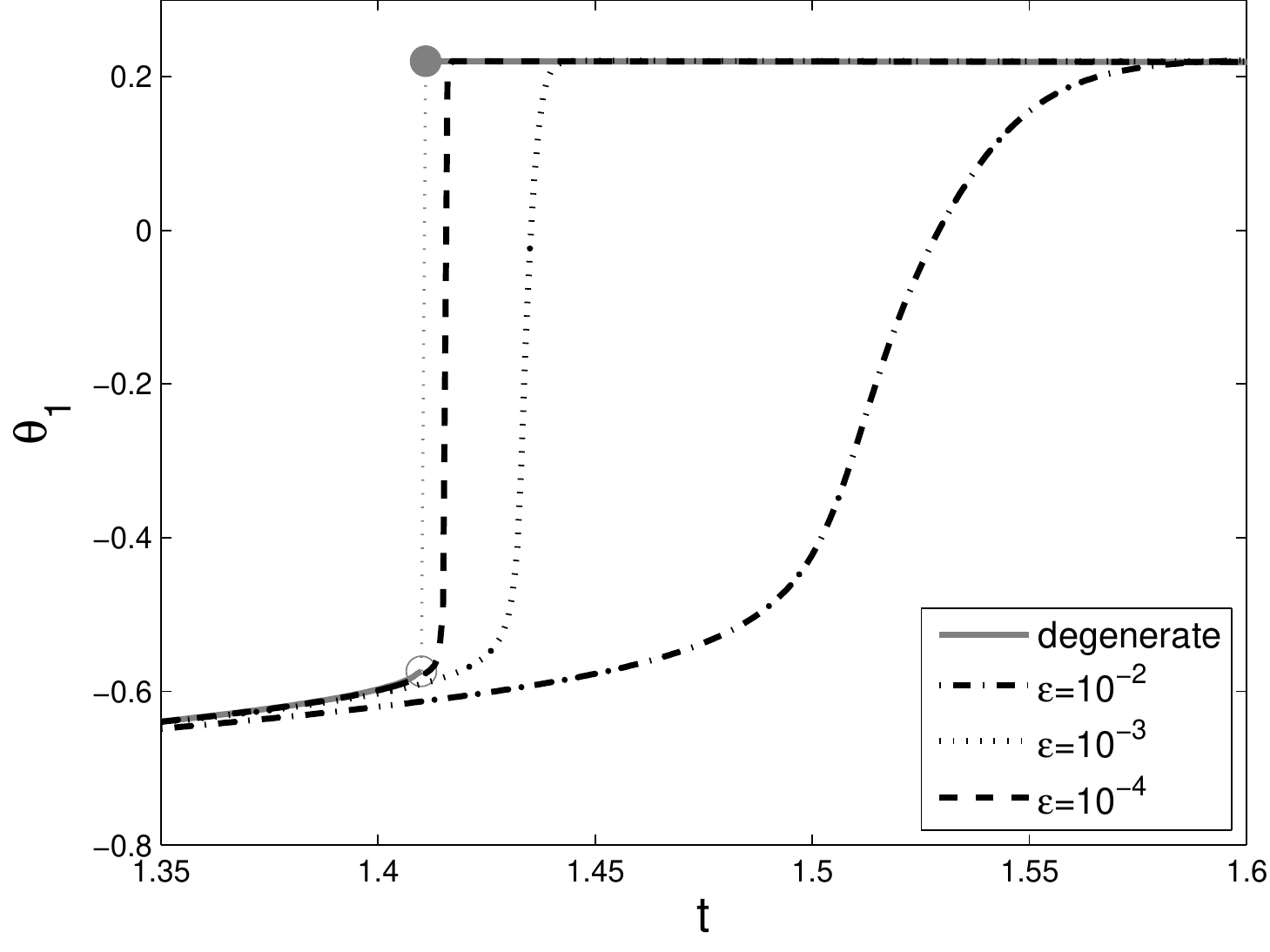}
$~~~$
\includegraphics[totalheight=0.25\textheight]{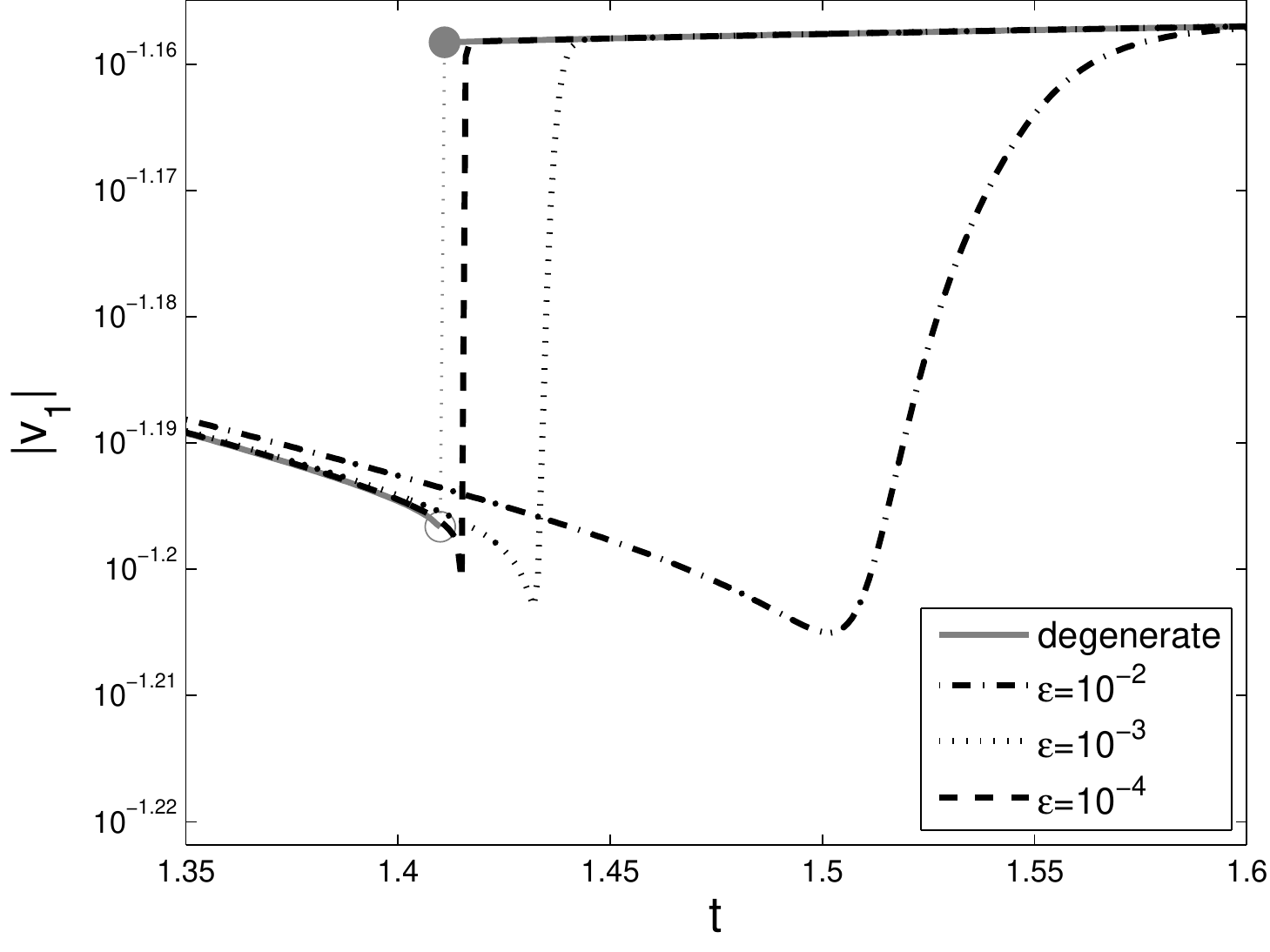}\\
 (a) \hspace{7.2cm} (b)
\caption{Solution of the relaxation system \eqref{eqn:so} near the first breakdown time $t^\ast = 1.41$ of the anisotropic model \eqref{eqn:fo-bz}, due to root loss. The plots show (a) $\theta_1^\ep(t)$ and (b) $|v_1^\ep(t)|$. Three values of $\ep$ are used to illustrate shock capturing: $10^{-2}, 10^{-3}$, and $10^{-4}$.  Near $t^\ast$, the direction changes from $\theta_1 \approx -0.57$ to  $\theta_1 \approx 0.22$, values indicated, respectively, by the empty and filled circles in Figure \ref{fig:HRjump1411}. Complete trajectories for this run can be found in Figure \ref{fig:trajectories full and zoom}(a). The breakdown time $t^\ast=1.41$ is indicated by the first square along the trajectory of the top-left particle (particle 1). A zoomed trajectory $x_1^\ep(t)$ near $t^\ast$ can be found in Figure \ref{fig:trajectories full and zoom}(b).}%
\label{fig:jump1}%
\end{figure}

\begin{figure}%
\centering
\includegraphics[totalheight=0.24\textheight]{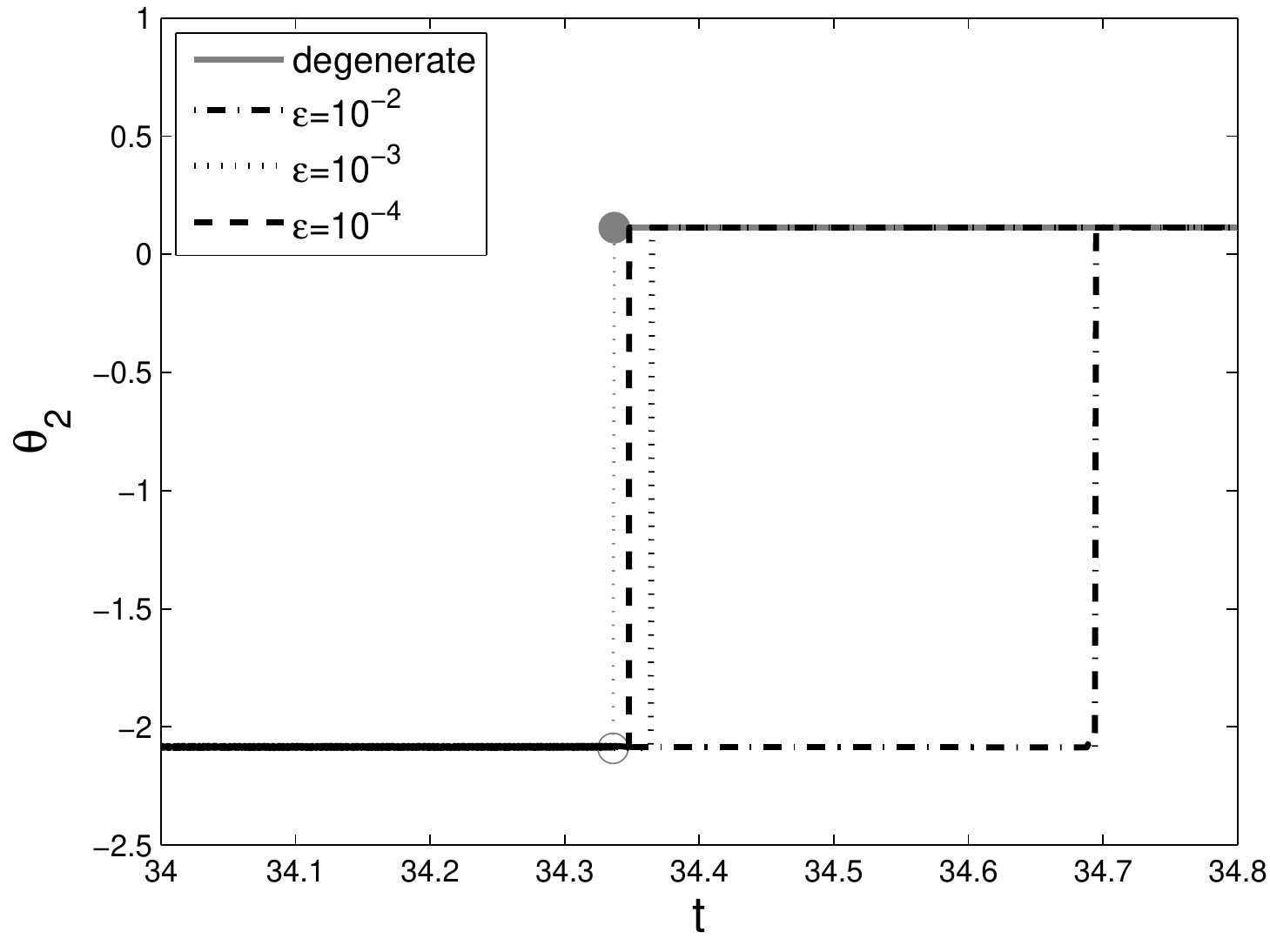}
$~~~$
\includegraphics[totalheight=0.25\textheight]{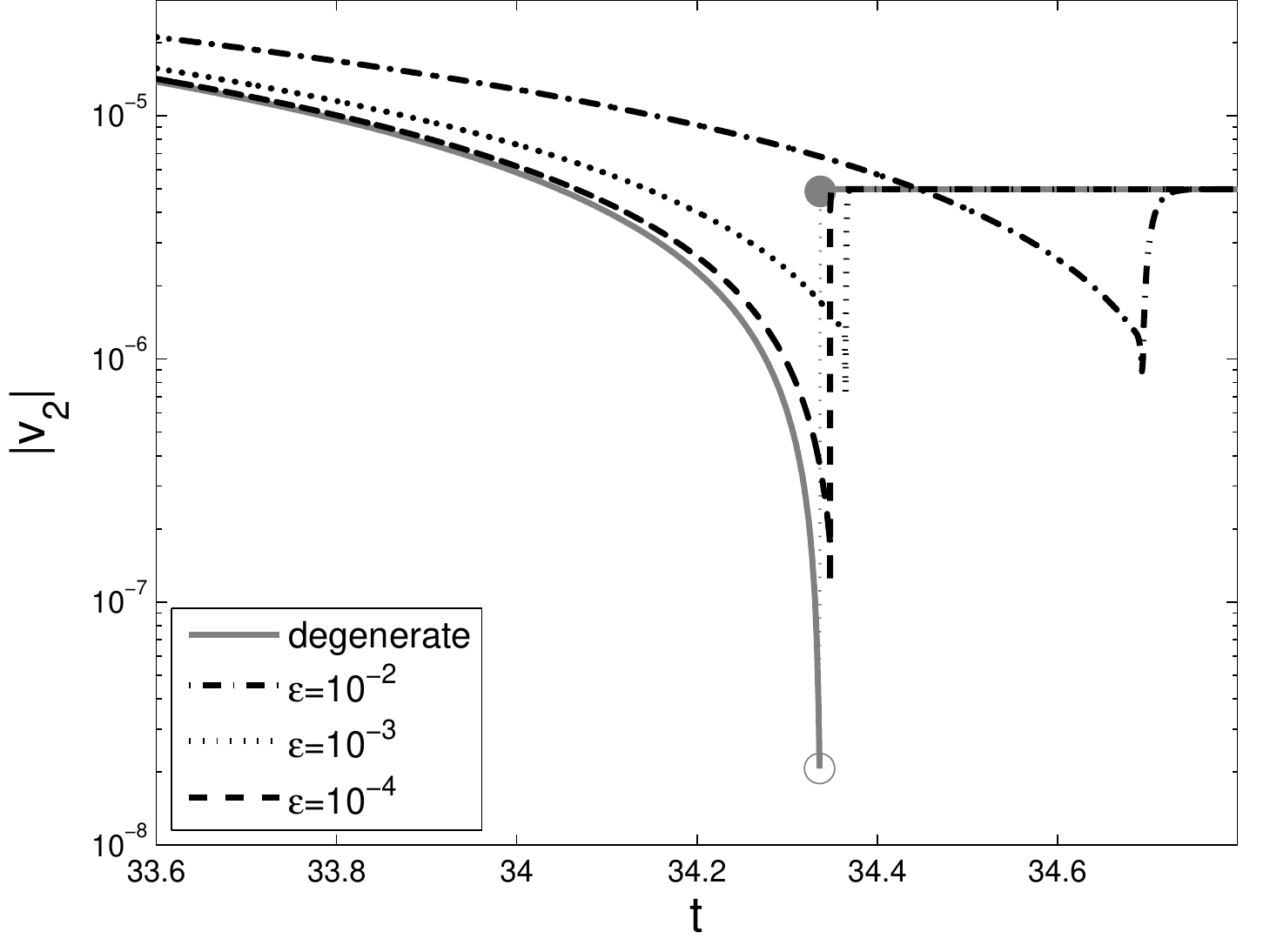}\\
 (a) \hspace{7.2cm} (b)
\caption{Solution of the relaxation system \eqref{eqn:so} near the third breakdown time $t^\ast = 34.34 $ of  \eqref{eqn:fo-bz}, due to stopping of particle 2. The plots show (a) $\theta_2^\ep(t)$ and (b) $|v_2^\ep(t)|$. Three values of $\ep$ are used to illustrate shock capturing: $10^{-2}, 10^{-3}$, and $10^{-4}$. The direction changes from $\theta_2 \approx -2.09$ to  $\theta_2 \approx 0.11$, values indicated, respectively, by the empty and filled circles in Figure \ref{fig:HRjump34337}. The complete trajectories can be found in Figure \ref{fig:trajectories full and zoom}(a), where the breakdown time $t^\ast=34.34$ is indicated by the square along the trajectory of particle 2 (top right). A zoomed trajectory $x_2^\ep(t)$ near breakdown can be found in Figure \ref{fig:trajectories full and zoom}(c).}%
\label{fig:jump3}%
\end{figure}

%%%%%%%%%%%%%%%%%%%%%%%%%%%%%%%%%%%%%%%%%%%%%%%%%%%%%%%%%%%

\section{Long-time evolution and concluding remarks}
\label{sect:algorithm}

\paragraph{Numerical implementation of \eqref{eqn:fo-bz} in two dimensions.}

Evolving the relaxation system \eqref{eqn:so} with small $\ep$ for large times is not practically feasible. The numerical strategy for the long-time evolution of \eqref{eqn:fo-bz} is to run the anisotropic model through its intervals of continuity and use the $\ep$-model only to capture the jumps.  For jumps of type I, this procedure is rather easy to implement in two dimensions, as illustrated in Section \ref{sect:jump-sel}. Indeed, suppose that in a numerical simulation of \eqref{eqn:fo-bz} a root loss has been identified in the discrete time step from $t_{n-1}$ to $t_{n}$. That is, for some particle $i$, the numerical velocity $v_i^{n}$ at time $t_n$ is no longer a fixed point of $\mathcal{F}_i$. Then, by ``freezing" the {\em post-jump} spatial configuration~$\mathbf{x}^{n}$ at the time~$t_n$, one can run the adjoint system \eqref{eqn:adj-i} with the fictitious time $\tau \to \infty$, in order to select the new, asymptotically stable root. Rename this root $v_i^n$ and then continue the evolution of~\eqref{eqn:fo-bz}. This procedure is the time-discrete version of the considerations from Section \ref{sect:jump-sel} on the selection of a jump by an infinitesimal extension of the spatial configuration at the breakdown time.

Jumps of type II can be similarly recovered, by freezing the post-jump spatial configuration. As explained in Section \ref{sect:jump-sel} above, this procedure is less theoretically grounded for jumps of type II, but we found that it works well in practice and captures the correct jumps.  We confirmed this with full, {\em real}-time evolutions of the relaxation model through type II discontinuities. That is, after detecting a jump in the discrete time interval from $t_{n-1}$ to $t_{n}$, return to the {\em pre-jump} phase-space configuration $(\mathbf{x}^{n-1}, \mathbf{v}^{n-1})$ at time $t_{n-1}$, initialize \eqref{eqn:so} with this data, and run the relaxation system with a fine time resolution to capture the steep solution that selects the post-jump root $\mathbf{v}^{n}$.

\paragraph{Long-time behaviour.}
An extensive numerical study of the long-time behaviour of solutions to \eqref{eqn:fo-bz} is beyond the scope of the present paper.
We only report briefly our observations.
The main feature is that the dynamics slows down significantly after a relatively short initial interval. For instance, particles in the numerical simulation considered above (referred to as IC 1 here) reach velocities of order $O(10^{-4})$ by the stopping breakdown time $t^\ast = 34.34$, and continue to decrease steadily after the jump. In Figure \ref{fig:long-timeN4}(b) we plot the maximum speed $\operatorname{max}_i |v_i|$ over time, to $t=5,000$.  We also considered the long time run corresponding to the same initial spatial configuration from Figure \ref{fig:instab}(a), but with an initial velocity $v_1$ pointing in the other stable direction, $\theta_1 \approx -1.78$; we refer to this initial condition as IC 2. The evolution of $\operatorname{max}_i |v_i|$ is also shown in Figure \ref{fig:long-timeN4}(b), with similar qualitative behaviour as for~IC 1.

The full evolution of the trajectories for IC 2 is shown in Figure \ref{fig:long-timeN4}(a), with the final configuration at $t=5,000$ indicated by filled diamonds. The empty diamonds in the figure represent the state at $t=5,000$ of the run with IC 1. We do not plot the full evolution of the trajectories corresponding to IC 1, since at the scale of the figure these would be indistinguishable from the solutions shown in Figure \ref{fig:trajectories full and zoom}(a). Note that the two sets of configurations have different centres of mass.  The centre of mass is not being conserved by the anisotropic model \eqref{eqn:fo-bz}, as it is for the isotropic model \eqref{eqn:fo-nobz}. The two configurations are close in shape to a rhombus, suggesting non-symmetrical states such as ellipses as possible quasi-equilibria.

Figure \ref{fig:long-timeN4} shows that both runs feature a fast initial dynamics (involving several jumps of both types), followed by slow motion.  The numerical results suggest that velocities continue to decrease indefinitely, and the system reaches a quasi-steady state. Stopping jumps become more typical at low speeds, as particles make small jiggles, turning toward and from the others, trying to reach an equilibrium. This jiggling aspect is not present in the isotropic model, as there, the unobstructed sensing of the others drives the particles quickly into an equilibrium configuration.

% Finally, recall that the isotropic model \eqref{eqn:fo-nobz} is a gradient flow with respect to the energy \eqref{eqn:energy}, meaning that its equilibrium positions are among the critical points of $E$. We have not identified however a similar energy function associated to the anisotropic system \eqref{eqn:fo-bz}. Looking into such energy considerations for our model is an interesting direction for future research.

\begin{figure}%
\centering
\includegraphics[totalheight=0.24\textheight]{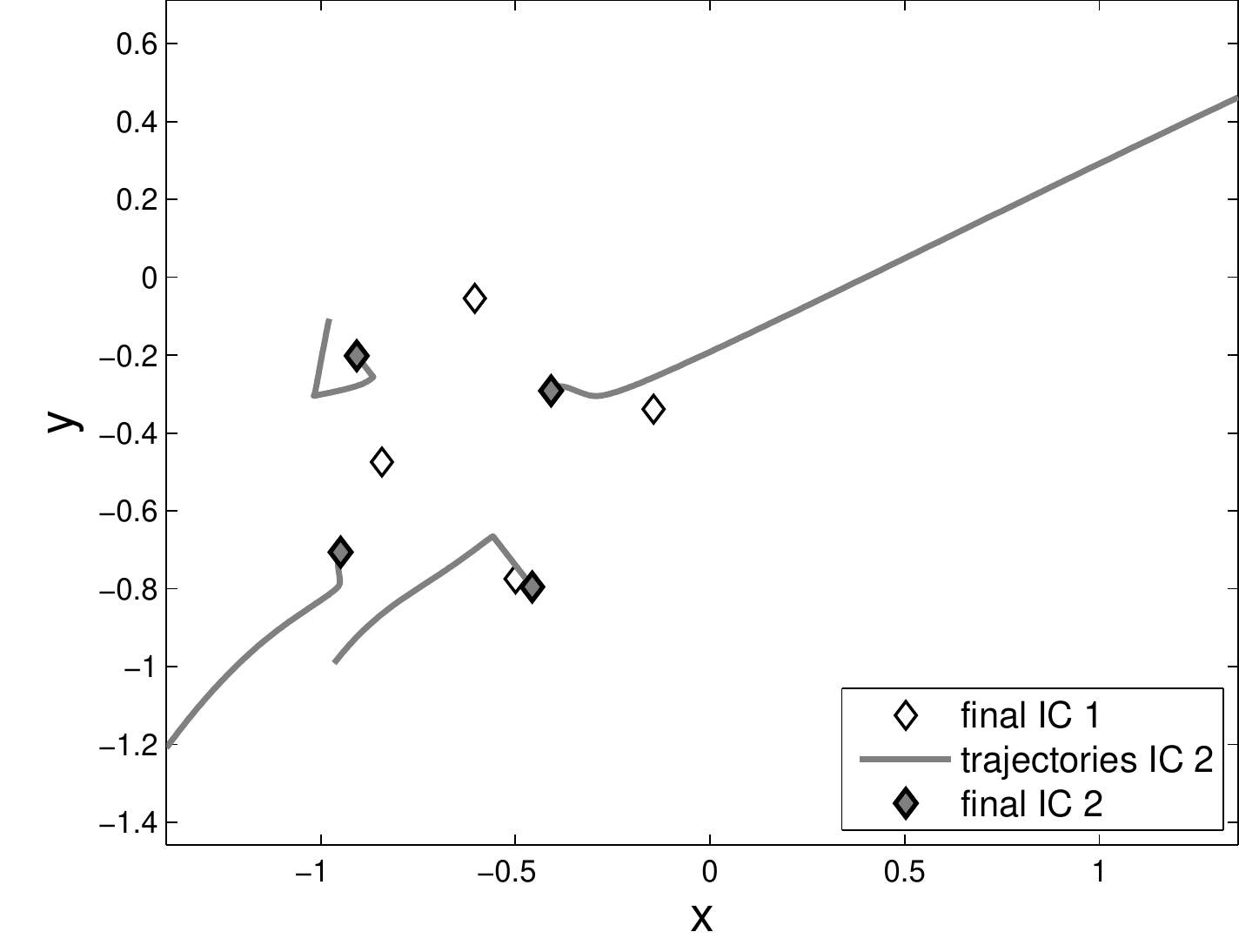}
$~~~$
\includegraphics[totalheight=0.25\textheight]{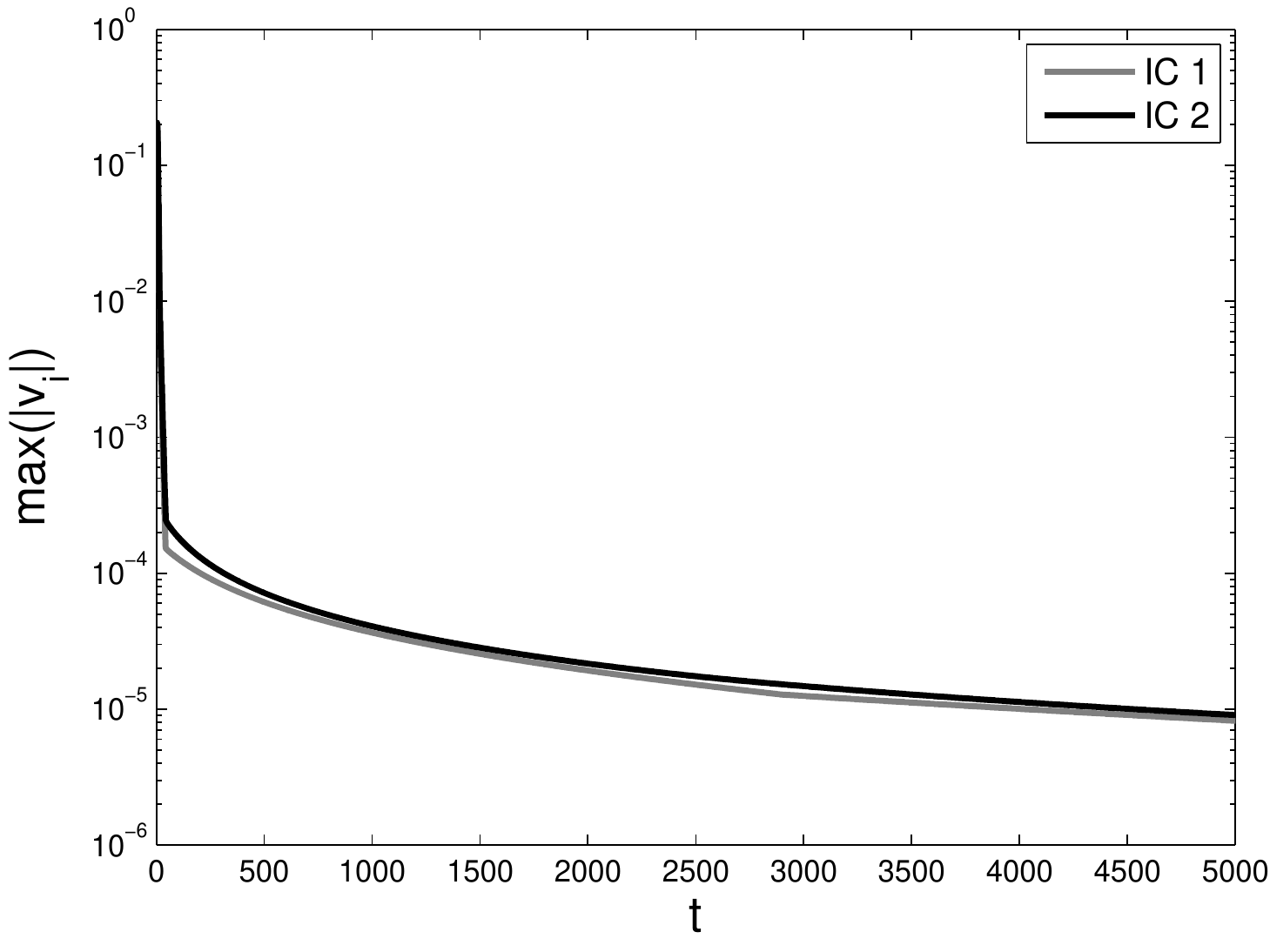}\\
 (a) \hspace{7.2cm} (b)
\caption{Long-time behaviour of \eqref{eqn:fo-bz}. (a) The solid grey lines represent the evolution of \eqref{eqn:fo-bz} starting from the initial configuration in Figure \ref{fig:instab}(a), in the direction of the stable root $\theta_1\approx -1.78$ --- we refer to this initial condition as IC 2. The initialization in the direction of the other stable root, $\theta_1\approx -1.00$, is referred to as IC 1. The filled and empty diamonds indicate the configurations of the two runs at $t=5,000$.  (b) Plot of $\operatorname{max}_i |v_i|$ over time, for the two sets of initial conditions. Particles reduce their speed and seem to approach a quasi-equilibrium.}%
\label{fig:long-timeN4}%
\end{figure}

\paragraph{Concluding remarks.} We showed in this paper that accounting for  anisotropy in the aggregation model \eqref{eqn:fo-nobz} brings up new  interesting issues, both analytically and numerically. In particular, we reinstated the role of the relaxation model \eqref{eqn:so}, which was initially used to formally derive the first-order model \eqref{eqn:fo-nobz}, but then mostly ignored by researchers on this topic. We end by noting  that, as the number of particles becomes large, accounting for all jumps that take place in the dynamics of \eqref{eqn:fo-bz} becomes quite challenging. The natural resort for the $N$ large case is the anisotropic extension of the continuum model \eqref{eq:aggre}, which is equation \eqref{eqn:rhot} with $v$ given implicitly by:
\[
v(x) = \int \nabla K(|x-y|) g \left( \frac{x-y}{|x-y|} \cdot \frac{v(x)}{|v(x)|}\right) \rho(y) dy.
\]
Here $K$ and $g$ have the same meaning as throughout the paper. Investigating such an anisotropic extension of the continuum model is an important, and quite challenging, new research direction.

%%%%%%%%%%%%%%%%%%%%%%%%%%%%%%%%%%%%%%%%%%%%%%%%%%%%%%%%%%%

\section*{Acknowledgments}
%This work was  mainly done during JE's visit to Simon Fraser University, Burnaby. He thanks the Department of Mathematics and R.C.~Fetecau for their hospitality. \\
The authors thank Adrian Muntean for various thoughtful suggestions during the work on this paper. JE thanks Giovanni Bonaschi, Manh Hong Duong, Patrick van Meurs, Georg Prokert and in particular Mark Peletier for their input that led to the generalized concept of a fixed point. JE acknowledges the financial support received from the Netherlands Organisation for Scientific Research (NWO), Graduate Programme 2010.

%%%%%%%%%%%%%%%%%%%%%%%%%%%%%%%%%%%%%%%%%%%%%%%%%%%
\bibliographystyle{plain}
\bibliography{lit}

%%%%%%%%%%%%%%%%%%%%%%%%%%%%%%%%%%%%%%%%%%%%%%%%%%%%%%%%%

\end{document}